\theoremstyle{plain}
\newtheorem{thm}{Theorem}[section]
\newtheorem{lem}[thm]{Lemma}
\newtheorem{cor}[thm]{Corollary}
\newtheorem{prop}[thm]{Proposition}
\newtheorem{rmk}[thm]{Remark}
\numberwithin{equation}{section}
\def\b{\beta}
\def\d{\Delta}
\def\e{\epsilon}
\def\l{\mathscr{L}}
\def\M{\mathbf{M^n}}
\def\O{\Omega}
\def\p2{\mathscr{P}_2(X)}
\def\bt{\bar{t}}
\def\bx{\bar{x}}
\def\x{x^{\star}}
\renewcommand{\section}{\@startsection{section}{1}{0mm}
	{-\baselineskip}{0.5\baselineskip}{\bf\leftline}}
\begin{document}
	
	\title[Liouville theorems and Harnack inequalities for Allen-Cahn type equation]{Liouville theorems and Harnack inequalities\\ for Allen-Cahn type equation}
	
	\author[Zhihao Lu]{Zhihao Lu}
	\address[Zhihao Lu]{School of Mathematics and Statistics, Jiangsu Normal University, Xuzhou 221116, P. R. China}
	\email{\href{mailto: Zhihao Lu <lzh139@mail.ustc.edu.cn>}{lzh139@mail.ustc.edu.cn}}

	\begin{abstract}
		We first give a logarithmic gradient estimate for the local positive solutions of Allen-Cahn equation on the complete Riemannian manifolds with Ricci curvature bounded below. As its natural corallary, Harnack inequality and  a  Liouville theorem for classical positive solutions are obtained. Later, we consider similar estimate under integral curvature condition and generalize previous results to a class nonlinear equations which contain some classical elliptic equations such as Lane-Emden equation, static Whitehead-Newell equation and static Fisher-KPP equation. Last, we briefly generalize them to equation with gradient item under Bakry-\'{E}mery curvature condition.
	\end{abstract}
	
	\keywords{Liouville theorem, Logarithmic gradient estimate, Harnack inequality, Allen-Cahn type equation}
	
	\subjclass[2020]{Primary: 35B09, 35B53; Secondary: 58J05, 35A09}
	
	\thanks{School of Mathematics and Statistics, Jiangsu Normal University, Xuzhou 221116, P. R. China}
	
	
	\maketitle
	\section{\textbf{Introduction}}
	
	Establishing Harnack inequality of positive solutions and searching Liouville property are interesting topics in the study of elliptic equations. In the paper, we establish them for the following Allen-Cahn equation and its suitable generalizations. Concretely speaking, we first consider the classical positive solutions of

	\begin{equation}\label{AC}
		\Delta u+u-u^3=0
	\end{equation}
	on a domain in the Riemannian manifolds  which possess Ricci curvature bounded below. For simplicity, we assume that the domain is geodesic ball and denote by $B(x_0,R)$ the open geodesic ball with center $x_0$ and radius $R$. Now, we state our results for equation \eqref{AC}. First, we get  a logarithmic gradient estimate for classical positive solutions of \eqref{AC}.
	
	\begin{thm}\label{main1}
		Let $(\M,g)$ be an n-dimensional complete Riemannian manifold with $Ric\ge-Kg$, where $K\ge 0$. If $u$ is a positive solution of \eqref{AC} on $B(x_0,2R)$, then 
		\begin{equation}\label{strong1}
			\sup_{B(x_0,R)}\frac{|\nabla u|^2}{u^2}\le \frac{\left(K-\frac{2}{n}(1+\b)\right)^{+}}{\frac{1}{n}(1+\b)^2-\b^2-\frac{\e}{2}\b^2}+\frac{C(n,\b,\epsilon)}{R^2}+\frac{C(n,\b,\e)\sqrt{K}}{R},
		\end{equation}	
		where $\b\in(0,\frac{1}{\sqrt{n}-1})\cap(0,n-1]$ and $\e\in(0,\frac{2}{n}(1+\frac{1}{\b})^2-2)$ are arbitrary.

	\end{thm}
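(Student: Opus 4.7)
The plan is a Bochner-plus-cutoff argument, using $\nabla f/|\nabla f|$ as a distinguished direction in the Hessian Cauchy--Schwarz, and introducing $\beta,\epsilon$ as Young-inequality weights.

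First I set $f:=\log u$, so $F:=|\nabla f|^2=|\nabla u|^2/u^2$ is exactly the quantity to be estimated. From $\Delta u=u^3-u$ one computes $\Delta f=u^2-1-F$ and $\langle\nabla f,\nabla\Delta f\rangle=2u^2 F-\langle\nabla f,\nabla F\rangle$. Applying Bochner with $\mathrm{Ric}\ge -Kg$ and the refined Hessian estimate $|\nabla^2 f|^2\ge \alpha^2+(\Delta f-\alpha)^2/(n-1)$, where $\alpha:=\langle\nabla f,\nabla F\rangle/(2F)$, yields
\begin{equation*}
\tfrac{1}{2}\Delta F\ge \alpha^2+\tfrac{(u^2-1-F-\alpha)^2}{n-1}+2u^2 F-2F\alpha-KF.
\end{equation*}
The Allen--Cahn feature that $2u^2 F\ge 0$ and $(u^2-1)^2\ge 0$, encoded in the identity $(u^2-1-F)^2+2nu^2 F=F^2+2F+(u^2-1)^2+2(n-1)u^2F$, will let me discard the $u$-dependent remainders after expansion.

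A Young split with weight $\beta$ on the $\alpha^2$, $(\Delta f-\alpha)^2$, and $2F\alpha$ pieces packages the $F$-dependent terms into a single leading $F^2$ contribution, giving
\begin{equation*}
\tfrac{1}{2}\Delta F\ge \bigl(\tfrac{(1+\beta)^2}{n}-\beta^2\bigr)F^2+\tfrac{2(1+\beta)}{n}F-KF-(\text{cross terms in }\nabla F)+(\text{non-neg.}).
\end{equation*}
Here $\beta<1/(\sqrt{n}-1)$ is precisely the positivity condition on the leading coefficient, and $\beta\le n-1$ arises in a secondary Young step controlling the $2u^2 F$--$(u^2-1)^2$ balance. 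At $\beta=0$ this recovers the naive bound $F\le nK-2$; as $\beta\nearrow 1/(\sqrt{n}-1)$ it approaches the sharp interior-maximum bound from $|\nabla^2 f|^2\ge(\Delta f)^2/(n-1)$, which holds at a critical point of $F$ because there $\nabla f$ becomes a null eigenvector of $\nabla^2 f$.

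For the local estimate I take a standard cutoff $\phi$ on $B(x_0,2R)$ with $\phi\equiv 1$ on $B(x_0,R)$, $|\nabla\phi|^2\le C\phi/R^2$, and (by Laplacian comparison with $\mathrm{Ric}\ge-Kg$) $|\Delta\phi|\le C(1/R^2+\sqrt{K}/R)$. Apply the maximum principle to $G:=\phi F$: at its interior maximum $\nabla G=0$ forces $\nabla F=-F\nabla\phi/\phi$ and hence $\alpha=-\langle\nabla f,\nabla\phi\rangle/(2\phi)$, while $\Delta G\le0$ upper-bounds $\Delta F$. Substituting produces mixed terms of order $F^{3/2}|\nabla\phi|/\phi$, which are absorbed into the $F^2$-coefficient by Young with parameter $\epsilon$ at cost $\tfrac{\epsilon\beta^2}{2}F^2$ --- giving the effective coefficient $\tfrac{(1+\beta)^2}{n}-\beta^2(1+\tfrac{\epsilon}{2})$ and explaining the constraint $\epsilon<\tfrac{2}{n}(1+\tfrac{1}{\beta})^2-2$ required to keep it positive. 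The remaining $F|\nabla\phi|^2/\phi^2$ and $F|\Delta\phi|/\phi$ contribute the $C/R^2$ and $C\sqrt{K}/R$ tails. Rearranging the resulting quadratic in $F$ at the max of $G$ and using $\phi\equiv 1$ on $B(x_0,R)$ yields the stated bound. The main obstacle is the parameter bookkeeping: executing the $\beta$-split in Bochner and the $\epsilon$-split in the cutoff simultaneously while keeping the effective leading coefficient positive and preventing uncontrolled $u$-growth from contaminating the final estimate.
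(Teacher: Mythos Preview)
Your proposal is correct and follows essentially the same Bernstein--cutoff strategy as the paper, but with a different bookkeeping device for the parameter $\beta$. The paper introduces $\beta$ through the power substitution $w=u^{-\beta}$ and derives an exact identity for $\Delta\bigl(|\nabla w|^2/w^2\bigr)$ (their Proposition~2.1), from which the leading coefficient $\tfrac{2}{n}(1+\tfrac{1}{\beta})^2-2$ and the linear term $\tfrac{4}{n}(1+\beta)$ drop out automatically; the constraints $\beta<1/(\sqrt{n}-1)$ and $\beta\le n-1$ then appear as the positivity conditions on the $F^2$-coefficient and on the $u^2F$-coefficient, respectively. Your route stays with $f=\log u$ and instead obtains the same differential inequality via a weighted matrix inequality---in effect dropping $\bigl|\nabla^2 f-\beta\,\nabla f\otimes\nabla f-\tfrac{1}{n}(\Delta f-\beta F)g\bigr|^2\ge 0$, which is exactly the ``Young split with weight $\beta$'' you describe and is equivalent to the paper's traceless-Hessian term after the change of variables $F_{\text{paper}}=\beta^2 F$. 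The subsequent cutoff argument, the $\epsilon$-Young absorption of the cross term $\langle\nabla\Phi,\nabla\ln w\rangle$ (your $F^{3/2}|\nabla\phi|/\phi$ term), and the resulting effective coefficient $\tfrac{1}{n}(1+\beta)^2-\beta^2(1+\tfrac{\epsilon}{2})$ are identical in both treatments. One small inaccuracy in your motivational remarks: the interior-maximum case $\alpha=0$ corresponds to $\beta=\tfrac{1}{n-1}$, not to the endpoint $\beta=\tfrac{1}{\sqrt{n}-1}$; but this does not affect the argument.
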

	From Theorem \ref{main1}, if $u$ is a positive solution on $\M$ with $Ric\ge-Kg$, then we have 
	\begin{equation}\label{ank}
		\sup_{\mathbf{M^{n}}}|\nabla\ln u|^2\le\inf\limits_{\b\in(0,\frac{1}{\sqrt{n}-1})\cap(0,n-1]}\frac{\left(K-\frac{2}{n}(1+\b)\right)^{+}}{\frac{1}{n}(1+\b)^2-\b^2}=:A(n,K).
	\end{equation}
	Here, we care the constant in the RHS of \eqref{strong1} because it can provide a Liouville theorem on negative curved space such as hyperbolic space. If we choose a fixed $\b=\b(n)$ and $\e=\e(n)$, then we deduce a weak form of \eqref{strong1}:
	\begin{equation}
		\sup_{B(x_0,R)}\frac{|\nabla u|^2}{u^2} \le C(n)\left(K+\frac{1}{R^2}\right).
	\end{equation}
	
	A direct corollary of Theorem \eqref{main1} is the following Harnack inequality. 
	\begin{cor}
		Let $(\M,g)$ be an n-dimensional complete Riemannian manifold with $Ric\ge-Kg$, where $K\ge 0$. If $u$ is a positive solution of \eqref{AC} on $B(x_0,2R)$, then
		\begin{equation}\label{HKAC}
			\sup\limits_{B(x_0,R)} u\le e^{C\left(\sqrt{K}R+1\right)}\cdot\inf\limits_{B(x_0,R)}u,
		\end{equation}
		where $C=C(n)$. Especially, if the solution on $\M$, then $\sup_{B(x_0,R)}u\le e^{\sqrt{A(n,K)}R}\inf_{B(x_0,R)}u$, where $A(n,K)$ is defined by \eqref{ank}.
	\end{cor}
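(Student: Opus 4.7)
The plan is to deduce the Harnack inequality directly from the logarithmic gradient estimate of Theorem~\ref{main1} via a path-integration argument, which is the standard route from Li--Yau type gradient bounds to Harnack inequalities.

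First, I would extract from \eqref{strong1} a clean single-constant bound of the form
\begin{equation*}
\sup_{B(x_0,R)} \frac{|\nabla u|^2}{u^2} \le C(n)\left(K + \frac{1}{R^2}\right)
\end{equation*}
by fixing an admissible choice $\beta=\beta(n)$, $\varepsilon=\varepsilon(n)$ and applying the elementary inequality $\sqrt{K}/R \le \tfrac{1}{2}K + \tfrac{1}{2R^2}$ to absorb the cross term on the right-hand side of \eqref{strong1}. This collapses the sharp estimate into a weak form $|\nabla \ln u|^2 \le C(n)(K+1/R^2)$ valid on all of $B(x_0,R)$.

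Next, for any $x,y \in B(x_0,R)$, I would connect $x$ to $y$ by concatenating the minimizing geodesic from $x$ to $x_0$ with the minimizing geodesic from $x_0$ to $y$. By construction every point on this piecewise curve lies within distance $\max\bigl(d(x,x_0),d(y,x_0)\bigr)<R$ of $x_0$, hence the entire path stays inside $B(x_0,R)$ where the pointwise gradient bound applies; its total length is $d(x,x_0)+d(x_0,y)<2R$. Integrating $|\nabla \ln u|$ along this curve gives
\begin{equation*}
|\ln u(x) - \ln u(y)| \;\le\; \int_\gamma |\nabla \ln u|\, ds \;\le\; 2R\sqrt{C(n)\left(K+\tfrac{1}{R^2}\right)} \;\le\; C'(n)\bigl(\sqrt{K}\,R+1\bigr).
\end{equation*}
Taking the supremum over $x$ and the infimum over $y$ and exponentiating yields \eqref{HKAC}. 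For the second (global) assertion, \eqref{ank} supplies the pointwise bound $|\nabla \ln u|^2 \le A(n,K)$ on all of $\M$, so the same integration argument along a direct or through-$x_0$ geodesic produces the stated exponential factor.

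The argument is essentially routine given Theorem~\ref{main1}; the only mild subtlety is the geometric necessity of routing the path through $x_0$ rather than using a direct minimizing geodesic from $x$ to $y$, since under only a lower Ricci bound the ball $B(x_0,R)$ need not be geodesically convex, and the gradient estimate has only been established on this ball.
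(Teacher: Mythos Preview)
Your argument is correct and is exactly the standard deduction the paper has in mind: the corollary is presented there simply as ``a direct corollary of Theorem~\ref{main1}'' with no separate proof, so your path-integration from the logarithmic gradient bound is the intended route. The one nitpick is that in the global case your integration actually yields the exponent $2\sqrt{A(n,K)}\,R$ (since $d(x,y)$ can approach $2R$ for $x,y\in B(x_0,R)$) rather than the $\sqrt{A(n,K)}\,R$ printed in the statement; this looks like a harmless factor-of-two slip in the paper itself rather than a defect in your reasoning.
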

	Another  corollary of Theorem \ref{main1} is the following Liouville theorem. For simple description, for any $n\in(1,\infty)$, we define universal constant
	\begin{equation}\label{N}
		N(n)=\sup\limits_{\b\in(0,\frac{1}{\sqrt{n}-1})\cap(0,n-1]}\frac{2}{n}(1+\b).
	\end{equation}

	\begin{thm}\label{main2}
		Let $(\M,g)$ be an n-dimensional complete Riemannian manifold with $Ric\ge-Kg$ and $K\le N(n)$, where $N$ is defined by \eqref{N}. If $u$ is a positive classical solution of \eqref{AC} on $\M$, then $u\equiv1$.
		
	\end{thm}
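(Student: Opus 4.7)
The strategy is to apply the local estimate Theorem \ref{main1} on the geodesic ball $B(x_0, 2R)$ for an arbitrary basepoint $x_0 \in \M$ and then send $R \to \infty$. The hypothesis $K \leq N(n)$ is designed precisely to kill the leading constant term of \eqref{strong1}.

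First I would exploit the definition $N(n) = \sup_\beta \frac{2}{n}(1+\beta)$, where $\beta$ ranges over $(0, \frac{1}{\sqrt{n}-1}) \cap (0, n-1]$. For $K < N(n)$ this lets me pick an admissible $\beta$ with $\frac{2}{n}(1+\beta) \geq K$, so that the positive part $\bigl(K - \frac{2}{n}(1+\beta)\bigr)^{+}$ vanishes. Fixing also any admissible $\epsilon$, Theorem \ref{main1} then reads
\begin{equation*}
\sup_{B(x_0, R)} \frac{|\nabla u|^2}{u^2} \;\leq\; \frac{C(n, \beta, \epsilon)}{R^2} + \frac{C(n, \beta, \epsilon)\sqrt{K}}{R}.
\end{equation*}
Since $u$ is defined on all of the complete manifold $\M$, the ball $B(x_0, 2R)$ lies inside the domain of $u$ for every $R > 0$, and the constants are $R$-independent. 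Letting $R \to \infty$ therefore gives $|\nabla u|/u \equiv 0$ on $\M$, so $u$ reduces to a positive constant $c$.

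Finally, substituting $u \equiv c$ into \eqref{AC} kills the Laplacian and leaves $c - c^3 = c(1-c)(1+c) = 0$; positivity of $u$ forces $c = 1$, giving $u \equiv 1$.

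The main obstacle I foresee is the boundary case $K = N(n)$: for $n \geq 3$ the supremum in \eqref{N} is approached as $\beta \to (\sqrt{n}-1)^{-1}$ but is not attained on the open interval, so the argument above strictly chooses $\beta$ only for $K < N(n)$. To cover equality one would either take $\beta = \beta(R)$ tending to the endpoint slowly enough that the leading term in \eqref{strong1} still vanishes faster than the $R$-dependent remainders blow up, or invoke a continuity/approximation argument in the curvature bound $K$; a direct L'Hôpital analysis at the endpoint shows the leading ratio tends to a positive limit $1/\sqrt{n}$, so the balancing has to be done with some care.
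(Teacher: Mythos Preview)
Your approach is exactly the paper's: Theorem~\ref{main2} is not given a separate proof in Section~\ref{S2} but is treated as an immediate corollary of Theorem~\ref{main1}, by sending $R\to\infty$ (and then $\epsilon\to 0$) to obtain \eqref{ank} and noting that $A(n,K)=0$ whenever some admissible $\beta$ makes the numerator $(K-\tfrac{2}{n}(1+\beta))^{+}$ vanish.

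The boundary-case concern you raise at $K=N(n)$ for $n\ge 3$ is legitimate and is not addressed in the paper either: for $n\ge 3$ the admissible range is the open interval $(0,\tfrac{1}{\sqrt{n}-1})$, the supremum in \eqref{N} is not attained, and your L'H\^opital computation showing that the ratio in \eqref{ank} tends to $1/\sqrt{n}>0$ as $\beta\to(\sqrt{n}-1)^{-1}$ is correct, so $A(n,N(n))=1/\sqrt{n}$ rather than $0$. For $n=2$ the range is $(0,1]$ and the supremum is attained at $\beta=1$ with denominator $\tfrac{1}{2}(1+\beta)^2-\beta^2=1>0$, so the equality case is unproblematic there; the paper's stated inequality $K\le N(n)$ thus appears to be a slight overreach for $n\ge 3$, and your caution is warranted.
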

	\begin{rmk}
		\rm	For $n=1$, we do not need Ricci condition, and for $n=2$, $N(2)=2$. If $n\ge 3$, $N(n)=\frac{2}{\sqrt{n}(\sqrt{n}-1)}$. Hence Theorem \ref{main2} implies that the global positive solutions of \eqref{AC} on hyperbolic plane $\mathbb{H}^2$ is constant 1. Theorem \ref{main2} seems to be the first Liouville theorem of elliptic equation on negative curved manifolds.
	\end{rmk}

	If we consider equation \eqref{AC} on closed manifolds, via the maximal principle, we have
	\begin{thm}\label{main3}
		Let $(\M,g)$ be an n-dimensional closed manifold. If $u$ is a positive solution of \eqref{AC} on $\M$, then $u\equiv1$.	
	\end{thm}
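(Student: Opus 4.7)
The plan is to apply the strong maximum principle directly, exploiting compactness of $\M$ to bypass all the gradient estimate machinery developed for the non-compact setting. Since $\M$ is closed, the positive solution $u$ attains both a global maximum at some point $x_{\max}\in\M$ and a global minimum at some point $x_{\min}\in\M$. This is the whole point of using compactness: the equation $\Delta u = u^3 - u$ localizes cleanly at critical points.

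First I would evaluate \eqref{AC} at $x_{\max}$. Because $u$ is $C^2$ and attains its max there, $\Delta u(x_{\max})\le 0$, which rearranged gives
\begin{equation*}
	u(x_{\max})\bigl(1-u(x_{\max})^2\bigr)=-\Delta u(x_{\max})\ge 0.
\end{equation*}
Since $u(x_{\max})>0$, this forces $u(x_{\max})\le 1$. Next I would apply the same reasoning at $x_{\min}$: there $\Delta u(x_{\min})\ge 0$, so
\begin{equation*}
	u(x_{\min})\bigl(1-u(x_{\min})^2\bigr)\le 0,
\end{equation*}
and positivity of $u(x_{\min})$ yields $u(x_{\min})\ge 1$.

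Combining the two inequalities gives $1\le u(x_{\min})\le u(x)\le u(x_{\max})\le 1$ for every $x\in\M$, hence $u\equiv 1$. There is no real obstacle here; the only thing to note is that positivity of $u$ is essential at the minimum step to divide through by $u(x_{\min})$ and conclude $u(x_{\min})^2\ge 1$ rather than the useless alternative $u(x_{\min})\le 0$. No curvature hypothesis is needed because compactness already supplies the interior extrema that the Li--Yau-style argument of Theorem \ref{main1} had to manufacture by a cutoff.
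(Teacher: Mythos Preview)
Your argument is correct and slightly more elementary than the paper's. Both start identically by evaluating at the maximum to obtain $u(x_{\max})\le 1$. From there the paper observes that $u\le 1$ everywhere makes $\Delta u=u(u^{2}-1)\le 0$ globally, and then invokes the maximum principle on a closed manifold to conclude that a superharmonic function is constant (hence $u\equiv 1$). You instead go to the minimum point directly and squeeze $1\le u(x_{\min})\le u\le u(x_{\max})\le 1$, which is entirely pointwise and avoids appealing to the strong maximum principle. Your route is marginally shorter and uses nothing beyond the second-derivative test; the paper's route has the minor advantage that the same superharmonicity step would also work on any compact domain with suitable boundary conditions.
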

	\begin{rmk}
		\rm	Actually, any entire positive solution of equation \eqref{AC} on complete Riemannian manifolds with Ricci bounded below must be less than or equal to 1, see \cite[Theorem 1.1, Corollary 1.2]{LU}.
	\end{rmk}
	\begin{rmk}
		\rm	Before our consideration, Hou \cite[Theorem 1.1,Theorem 1.2]{H} obtained similar results as Theorem \ref{main1} and Theorem \ref{main2} with additional condition $0<u\le 1$ and nonnegative Ricci curvature and their proof can not remove these conditions. 
	\end{rmk}
	
	The rest of this paper is organized as follows. We prove Theorem \ref{main1}, Theorem \ref{main2} and Theorem \ref{main3} in Section \ref{S2}. In Section \ref{S3}, we give up the condition "Riemannian manifold with Ricci curvature bounded below" and consider same estimate under "$L^p$ Ricci curvature pertubation" condition which both contain non-negative Ricci curvature case. Later, in Section \ref{S4}, we generalize same estimates in Section \ref{S2} and Section \ref{S3} to more nonlinear elliptic equations which contains Lane-Emden equation, static Whitehead-Newell equation and static  Fisher-KPP equation. Meanwhile, we get Liouville theorems for them on complete Riemannian manifolds with non-negative Ricci curvature. Last, we slightly  generalize results in Section \ref{S4} to equations with gradient item under Bakry-\'{E}mery curvature condition.

	\section{\textbf{Logarithmic gradient estimate for Allen-Cahn equation}}\label{S2}
	In this section, we give a proof of Theorem \ref{main1} via Bernstein method.
	
	Let $u$ be a positive smooth solution of \eqref{AC} on a domain $\O\subset\M$. First, for $\b\neq 0$, we set
	\begin{equation}\label{tran}
		w=u^{-\beta},
	\end{equation}
	and auxiliary function
	\begin{equation}\label{F}
		F=\frac{|\nabla w|^2}{w^2}.
	\end{equation}

	Then we have the following technical proposition.
	\begin{prop}\label{k1}
		Let $u$ be a positive smooth solution of \eqref{AC} on a domain $\O\subset\M$ and $F$ be defined by \eqref{F} for some $\b\neq 0$. Then we have
		\begin{eqnarray}
			\Delta F&=&2w^{-2}\left|\nabla^2w-\frac{\Delta w}{n} g\right|^2+2w^{-2}Ric(\nabla w,\nabla w)+2\left(\frac{1}{\b}-1\right)\left\langle\nabla F,\nabla \ln w\right\rangle\nonumber\\
			&&+\left(\frac{4}{n}(1+\b)+4\left(1-\frac{1+\b}{n}\right)u^2\right)\frac{|\nabla w|^2}{w^2}\nonumber\\
			&&+\left(\frac{2}{n}\left(1+\frac{1}{\b}\right)^2-2\right)\frac{|\nabla w|^4}{w^4}+\frac{2\b^2}{n}\left(1-u^2\right)^2.
		\end{eqnarray}

	\end{prop}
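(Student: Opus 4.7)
The plan is a straightforward Bochner-type computation, with the trick being to work with $f := \ln w = -\beta \ln u$ so that $F = |\nabla f|^2$ is simply the squared gradient of a scalar. Since the final formula is stated in terms of $w$ rather than $f$, the bookkeeping at the end requires the standard conversion between Hessians of $\ln w$ and of $w$.

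First I would use the equation $\Delta u = u^3 - u$ to compute $\Delta f$. A direct calculation gives $\nabla f = -\beta \nabla u / u$ and
\begin{equation*}
\Delta f = -\beta \frac{\Delta u}{u} + \beta \frac{|\nabla u|^2}{u^2} = \beta(1-u^2) + \frac{1}{\beta} F.
\end{equation*}
Equivalently, transforming back, $\Delta w = \beta(1-u^2)w + \frac{1+\beta}{\beta}\frac{|\nabla w|^2}{w}$, which will be needed once I express $(\Delta w)^2/n$ in $u$ and $F$.

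Second, I would apply the Bochner formula to $f$,
\begin{equation*}
\Delta F = 2|\nabla^2 f|^2 + 2\langle \nabla f,\nabla \Delta f\rangle + 2\,\mathrm{Ric}(\nabla f,\nabla f),
\end{equation*}
and compute $\langle \nabla f,\nabla \Delta f\rangle$. Using the formula for $\Delta f$, differentiating and pairing with $\nabla f = -\beta \nabla u/u$ produces
\begin{equation*}
\langle \nabla f,\nabla \Delta f\rangle = 2u^2 F + \frac{1}{\beta}\langle \nabla \ln w,\nabla F\rangle,
\end{equation*}
because $-2\beta u \langle \nabla f,\nabla u\rangle = 2\beta^2 |\nabla u|^2 = 2u^2 F$.

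Third, I would rewrite $|\nabla^2 f|^2$ in terms of $\nabla^2 w$ via the identity $\nabla^2 f = \nabla^2 w/w - (\nabla w\otimes \nabla w)/w^2$. Using $\nabla^i\nabla^j w\,\nabla_i w\,\nabla_j w = \tfrac{1}{2}\langle \nabla w,\nabla|\nabla w|^2\rangle$ and $|\nabla w|^2 = F w^2$, a short calculation yields
\begin{equation*}
|\nabla^2 f|^2 = w^{-2}|\nabla^2 w|^2 - \langle \nabla \ln w,\nabla F\rangle - F^2,
\end{equation*}
which produces the cross term $2(1/\beta - 1)\langle \nabla F,\nabla \ln w\rangle$ after combining with the Bochner contribution $(2/\beta)\langle\nabla \ln w,\nabla F\rangle$. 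The Ricci term of the Bochner formula trivially becomes $2w^{-2}\mathrm{Ric}(\nabla w,\nabla w)$.

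Finally, I would split off the traceless Hessian via $|\nabla^2 w|^2 = |\nabla^2 w - (\Delta w/n)g|^2 + (\Delta w)^2/n$, then substitute the explicit expression for $\Delta w/w$ obtained above and expand the square. The cross product $2(1+\beta)(1-u^2)F/n$ combines with the $4u^2 F$ from Bochner to give the stated coefficient $\frac{4}{n}(1+\beta) + 4(1-\frac{1+\beta}{n})u^2$, the $F^2/\beta^2$ terms collect into $(\frac{2}{n}(1+\frac{1}{\beta})^2 - 2)F^2$, and the $(1-u^2)^2$ part yields the final $\frac{2\beta^2}{n}(1-u^2)^2$. There is no serious obstacle, only a tedious recollection of coefficients; the only place one has to be careful is the conversion between $|\nabla^2 f|^2$ and $|\nabla^2 w|^2$, since that is where the crucial drift term $(1/\beta -1)\langle \nabla F,\nabla \ln w\rangle$ is generated.
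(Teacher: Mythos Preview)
Your proof is correct and follows essentially the same Bochner-type computation as the paper; the only organizational difference is that you apply Bochner directly to $f=\ln w$ (so that $F=|\nabla f|^2$) and then convert $|\nabla^2 f|^2$ back to $|\nabla^2 w|^2$, whereas the paper expands $\Delta(w^{-2}|\nabla w|^2)$ via the product rule and applies Bochner to $|\nabla w|^2$. The two routes use the same ingredients and the same traceless-Hessian splitting at the end, and your version is, if anything, slightly more streamlined.
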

	\begin{proof}[\textbf{Proof}]
		By equation \eqref{AC} and \eqref{tran}, we have
		\begin{equation}\label{weq}
			\d w=\left(1+\frac{1}{\b}\right)\frac{|\nabla w|^2}{w}+\b w(1-u^2).
		\end{equation}
		
		Chain rule gives
		\begin{equation}\label{p1}
			\d\left(\frac{|\nabla w|^2}{w^2}\right)=\d\left(w^{-2}\right)|\nabla w|^2+\d|\nabla w|^2\cdot w^{-2}+2\langle\nabla (w^{-2}),\nabla|\nabla w|^2\rangle.
		\end{equation}
		
		By equation \eqref{weq} and chain rule again, we deduce 
		\begin{eqnarray}\label{p2}
			\d(w^{-2})|\nabla w|^2=\left(4-\frac{2}{\b}\right)\frac{|\nabla w|^4}{w^4}-2\b(1-u^2)\frac{|\nabla w|^2}{w^2}
		\end{eqnarray}
		
		and
		\begin{equation}\label{p3}
			2\langle\nabla (w^{-2}),\nabla|\nabla w|^2\rangle=-8w^{-3}\nabla^2w(\nabla w,\nabla w).
		\end{equation}
		
		Using Bochner formula and equation \eqref{weq} again, one derives
		\begin{eqnarray}\label{p4}
			\d|\nabla w|^2\cdot w^{-2}&=&2w^{-2}\left(|\nabla^2 w|^2+Ric(\nabla w,\nabla w)+\langle\nabla\d w,\nabla w\rangle\right)\nonumber\\
			&=&2w^{-2}\left(|\nabla^2 w|^2+Ric(\nabla w,\nabla w)\right)+2\left(\b(1-u^2)+2u^2\right)\frac{|\nabla w|^2}{w^2}\nonumber\\
			&&-2\left(1+\frac{1}{\b}\right)\frac{|\nabla w|^4}{w^4}+4\left(1+\frac{1}{\b}\right)w^{-3}\nabla^2w(\nabla w,\nabla w).
		\end{eqnarray}
		
		Plugging \eqref{p2}-\eqref{p4} into \eqref{p1}, we see
		
		\begin{eqnarray}\label{p5}
			\d\left(\frac{|\nabla w|^2}{w^2}\right)&=&2w^{-2}\left(|\nabla^2 w|^2+Ric(\nabla w,\nabla w)\right)\nonumber\\
			&&+\left(2-\frac{4}{\b}\right)\frac{|\nabla w|^4}{w^4}+4u^2\frac{|\nabla w|^2}{w^2}+4\left(\frac{1}{\b}-1\right)w^{-3}\nabla^2(\nabla w,\nabla w).
		\end{eqnarray}
		
		Notice that
		\begin{equation}\label{p6}
			4\left(\frac{1}{\b}-1\right)w^{-3}\nabla^2 w(\nabla w,\nabla w)=2\left(\frac{1}{\b}-1\right)\langle\nabla F,\nabla\ln w\rangle+4\left(\frac{1}{\b}-1\right)\frac{|\nabla w|^4}{w^4}
		\end{equation}
		
		and	
		\begin{equation}\label{p7}
			|\nabla^2 w|^2=\left|\nabla^2 w-\frac{\d w}{n}g\right|^2+\frac{1}{n}\left(\left(1+\frac{1}{\b}\right)\frac{|\nabla w|^2}{w}+\b w(1-u^2)\right)^2.
		\end{equation}
		
		Now, substituting \eqref{p6} and \eqref{p7} into \eqref{p5} yields our desired result.

	\end{proof}
	Via the maximal principle, we directly prove the Theorem \ref{main3}.
	\begin{proof}[\textbf{Proof of Theorem \ref{main3}}]
	Assume that  the positive solution $u$ gets its maximal value at $\x$, then $\Delta u(\x)\le 0$, which implies $u(\x)\le 1$. So $$\Delta u=u(u^2-1)\le 0.$$
		By the maximal principle on closed manifolds,  $u$ must be constant and so $u\equiv 1$.  
	\end{proof}
	
	
	


	\begin{proof}[\textbf{Proof of Theorem \ref{main1}}]
		Set $\O=B(x_0,2R)$  and $F$ is same as in the Proposition \ref{k1}. First, by Laplacian comparison theorem, we can choose smooth cut-off function $\Phi$ such that \\
		(i) $\Phi(x)=\phi(d(x_0,x))$, here $\phi$ is a nonincreasing function on $[0,\infty)$ and
		\begin{eqnarray}
			\Phi(x)=
			\begin{cases}
				1\qquad\qquad\qquad\,\text{if}\qquad x\in B(x_0,R)\nonumber\\
				0\qquad\qquad\qquad\,\text{if}\qquad x\in B(x_0,2R)\setminus B(x_0,\frac{3}{2}R).
			\end{cases}
		\end{eqnarray}
		(ii) For any fixed $l\in(0,1)$,
		\begin{equation}
			\frac{|\nabla \Phi|}{\Phi^{l}}\le\frac{C(l)}{R},\nonumber
		\end{equation}
		(iii)
		\begin{equation}
			\Delta \Phi\ge-\frac{C(n)\sqrt{K}}{R}-\frac{C(n)}{R^2}\nonumber
		\end{equation}
		on $B(x_0,2R)$ and outside of cut locus of $x_0$.
		
		Case 1: $n=1$. We choose $\b=1$, then Proposition \ref{k1} gives
		\begin{eqnarray}\label{31}
			\Delta F&\ge&4\left(1-u^2\right)\frac{|\nabla w|^2}{w^2}+6\frac{|\nabla w|^4}{w^4}+2\left(1-u^2\right)^2
			\ge4F^2.
		\end{eqnarray}		
		Choosing $\Phi$ as above, without loss of generality, at the maximal point $x$ of $\Phi F$, we assume $\Phi F(x)>0$ and the point $x$ is outside of cut locus of $x_0$, or we can use Calabi's argument as in Li-Yau \cite{LY}, then we have
		\begin{eqnarray}\label{32}
			0&\ge& \d (\Phi F)(x)=\d \Phi\cdot F+\Phi\d F-2\frac{|\nabla \Phi|^2}{\Phi}F.
		\end{eqnarray}
		Using the property of $\Phi$, substituting \eqref{31} into  \eqref{32} yields	
		\begin{eqnarray}\label{33}
			0&\ge& 4(\Phi F)^2-\frac{C}{R^2}\Phi F,
		\end{eqnarray}	
		so	
		\begin{equation}
			\sup_{B(x_0,R)}F\le \Phi F(x)\le\frac{C}{R^2}.
		\end{equation}	
		
		Case 2: $n> 1$.	By Proposition \ref{k1}, if we choose any $\b\in(0,\frac{1}{\sqrt{n}-1})$ and $\b\le n-1$, then  we have
		\begin{eqnarray}\label{34}
			\Delta F&\ge&2\left(\frac{1}{\b}-1\right)\left\langle\nabla F,\nabla \ln w\right\rangle+\left(\frac{2}{n}\left(1+\frac{1}{\b}\right)^2-2\right)F^2-2\left(K-\frac{2}{n}(1+\b)\right)F.
		\end{eqnarray}	
		Choosing $\Phi$ as before, 	by same argument as in Case 1, combining \eqref{34} and \eqref{32}, we conclude that at the maximal point $x$ of $\Phi F$, we have
		\begin{eqnarray}
			0&\ge& \d\Phi \cdot(\Phi F)-2\frac{|\nabla \Phi|^2}{\Phi}(\Phi F)-2\left(K-\frac{2}{n}(1+\b)\right)\Phi^2F\nonumber\\
			&&+\left(\frac{2}{n}\left(1+\frac{1}{\b}\right)^2-2\right)(\Phi F)^2-2\left(\frac{1}{\b}-1\right)\left\langle\nabla \Phi,\nabla \ln w\right\rangle(\Phi F)\nonumber\\
			&\ge&\d\Phi \cdot(\Phi F)-2\frac{|\nabla \Phi|^2}{\Phi}(\Phi F)-2\left(K-\frac{2}{n}(1+\b)\right)\Phi^2F\nonumber\\
			&&+\left(\frac{2}{n}\left(1+\frac{1}{\b}\right)^2-2-\e\right)(\Phi F)^2-C(n,\epsilon)\frac{|\nabla \Phi|^2}{\Phi}(\Phi F),
		\end{eqnarray}	
		where we use Cauchy-Schwarz inequality and basic inequality in the last inequality and choose any $\epsilon\in(0,\frac{2}{n}(1+\frac{1}{\b})^2-2)$.  Hence	
		\begin{eqnarray}
			\sup_{B(x_0,R)}\frac{|\nabla u|^2}{u^2}&=&	\b^{-2}\sup_{B(x_0,R)}F\le \b^{-2}\Phi F(x)\nonumber\\
			&\le&  \frac{\left(K-\frac{2}{n}(1+\b)\right)^{+}}{\frac{1}{n}(1+\b)^2-\b^2-\frac{\e}{2}\b^2}+\frac{C(n,\b,\epsilon)}{R^2}+\frac{C(n,\b,\e)\sqrt{K}}{R}.
		\end{eqnarray}
		Then we finish 	the proof.

	\end{proof}

	\section{\textbf{Logarithmic gradient estimate under Ricci curvature pertubation}}\label{S3}
	
	In this section, we consider corresponding estimate in Section \ref{S2} under Ricci curvature pertubation. Concretely, we introduce the following quantity as in Dai-Wei-Zhang \cite{DWZ}. Let $(\M,g)$ be an $n$-dimensional Riemannian manifold, for $p>\frac{n}{2}$, we define
	\begin{equation}
		k(x,p,R,K)=R^2\left(\frac{1}{|B(x,R)|}\int_{B(x,R)}(Ric_K^-(y))^pdy\right)^{\frac{1}{p}}
	\end{equation}
	and
	\begin{equation}
		k(p,R,K)=\sup_{x\in\mathbf{M^{n}}}k(x,p,R,K),
	\end{equation}
	where we define $\rho(x)$ be the smallest eigenvalue for Ricci tensor Ric: $T_xM\to T_xM$ and $Ric_K^-(x)=((n-1)K-\rho(x))^{+}=\max((n-1)K-\rho(x),0)$. If $K=0$, we denote it by $Ric^-$. In this special case, $k(x,p,R,K)$ and $k(p,R,K)$ are scale invariant and we denote them by $k(x,p,R)$ and $k(p,R)$ respectively.  
	
	On the complete manifolds with non-negative Ricci curvature, from Theorem \ref{main1}, we can see that any classical positive solution of \eqref{AC} on $B(x_0,2R)\subset\M$ satisfies  
	\begin{equation}
		\sup_{B(x_0,R)}\frac{|\nabla u|^2}{u^2}\le \frac{C(n)}{R^2}.
	\end{equation}	
	
	A suitable generalization of above estimate is the following theorem.
	\begin{thm}\label{3001}
		Let $(\M,g)$ be an n-dimensional complete Riemannian manifold. If $u$ is a positive solution of \eqref{AC} on $B(x_0,2R)$, then for 
		any $p>\frac{n}{2}$, there exist constants $k=k(n,p)>0$ and $C=C(n,p)$ such that if $k(p,R)\le k$, we have
		\begin{equation}
			\sup_{B(x_0,R)}\frac{|\nabla u|^2}{u^2}\le \frac{C}{R^2}.
		\end{equation}	
		
	\end{thm}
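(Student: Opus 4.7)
The plan is to adapt the Bernstein method of Theorem~\ref{main1} to the integral curvature setting by replacing the pointwise maximum principle with a Nash--Moser iteration. Proposition~\ref{k1} is a pointwise identity that makes no use of any Ricci lower bound, so it remains valid verbatim; the only ingredient of the Section~\ref{S2} argument that must be modified is the curvature term.

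First, from Proposition~\ref{k1} I bound the Bochner term by $2w^{-2}Ric(\nabla w,\nabla w)\ge -2\rho^{-}F$, choose $\b\in(0,\frac{1}{\sqrt{n}-1})\cap(0,n-1]$ and $\e$ as in the proof of Theorem~\ref{main1}, and absorb the drift $2(\frac{1}{\b}-1)\langle\nabla F,\nabla\ln w\rangle$ via Cauchy--Schwarz exactly as before. This yields a pointwise subsolution-type inequality of the form
\begin{equation*}
-\Delta F+2\Big(\frac{1}{\b}-1\Big)\langle\nabla F,\nabla\ln w\rangle\le -c_{1}(n,\b,\e)\,F^{2}+2\rho^{-}F,
\end{equation*}
with $c_{1}>0$ for admissible parameters.

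Next, I pass to a weak formulation. For a cutoff $\Phi$ supported in $B(x_{0},2R)$ with $\Phi\equiv1$ on $B(x_{0},R)$, multiply by $\Phi^{2}F^{q-1}$ for $q\ge1$ and integrate. Integration by parts against $\Delta F$ together with absorption of the drift via Cauchy--Schwarz (which is possible precisely because of the choice of $\b$) yields a Caccioppoli-type estimate of the schematic form
\begin{equation*}
c\int\Phi^{2}|\nabla F^{q/2}|^{2}+c_{1}\int\Phi^{2}F^{q+1}\le C(q)\int|\nabla\Phi|^{2}F^{q}+C\int\Phi^{2}\rho^{-}F^{q}.
\end{equation*}
I then invoke the local Sobolev inequality of Dai--Wei--Zhang, which holds on $B(x_{0},2R)$ once $k(p,R)\le k(n,p)$ for a sufficiently small threshold, together with H\"older's inequality (using $p>n/2$) to control the perturbation term; the smallness of $k(p,R)$ allows the $\rho^{-}$ integral to be absorbed into the left-hand side. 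A Moser iteration along a dyadic sequence $q_{j}\to\infty$, driven by the favourable $F^{q+1}$ term on the left, then gives the claimed bound $\sup_{B(x_{0},R)}F\le C(n,p)/R^{2}$, from which the estimate on $|\nabla u|^{2}/u^{2}$ follows via $F=\b^{2}|\nabla u|^{2}/u^{2}$.

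The main obstacle is ensuring that after every absorption the coefficient of $\int\Phi^{2}F^{q+1}$ stays uniformly positive in $q$ and that the product $\prod_{j}C_{j}^{1/q_{j}}$ of iteration constants remains bounded. This is where the integral smallness of $k(p,R)$ and the superlinear factor $F$ coming from the Allen--Cahn nonlinearity cooperate: the former provides uniform control of the perturbation across all iteration steps, while the latter supplies the gain that replaced the pointwise Ricci lower bound used in Theorem~\ref{main1}, and is what produces the sharp $R^{-2}$ scaling of the final estimate.
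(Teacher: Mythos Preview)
Your strategy is genuinely different from the paper's. The paper does \emph{not} run a Nash--Moser iteration. Instead it follows the parabolic trick of Zhang--Zhu: one introduces an auxiliary time variable, proves a differential inequality for $JF$ (Lemma~\ref{301}) where $J(x,t)$ is the solution of $\Delta J-\partial_t J-5\delta^{-1}|\nabla J|^2/J-2Ric^-=0$ on $B(x_0,2R)\times(0,\infty)$ with unit boundary/initial data (Lemma~\ref{C3}), and then applies the ordinary parabolic maximum principle to $G=t\,\Phi\,J\,F$. The point is that the choice of $J$ makes the $Ric^-$ term disappear from the inequality for $JF$, while the drift $2(\tfrac{1}{\b}-1)\langle\nabla(JF),\nabla\ln w\rangle$ is handled exactly as in Theorem~\ref{main1}, i.e.\ at the maximum point where $\nabla(\Phi JF)=0$. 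Setting $T=R^2$ and using the lower bound $J\ge J_{2R}(T)$ from Lemma~\ref{C3} gives the result. This route needs only the cut-off of Lemma~\ref{ecf} and the solvability result Lemma~\ref{C3}; no Sobolev inequality and no iteration enter.

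Your iteration plan is a legitimate alternative and is known to work for related problems, but one step is not ``exactly as before'' and deserves a warning. In Theorem~\ref{main1} the drift is eliminated \emph{pointwise} at the maximum via $\Phi\nabla F=-F\nabla\Phi$; in your weak formulation you must instead absorb $\int\Phi^2F^{q-1}\langle\nabla F,\nabla\ln w\rangle$ with $|\nabla\ln w|=F^{1/2}$. After Young's inequality this produces a term $\tfrac{(\frac{1}{\b}-1)^2}{\epsilon}\int\Phi^2F^{q+1}$ competing with your good $c_1\int\Phi^2F^{q+1}$, together with $\epsilon\int\Phi^2F^{q-2}|\nabla F|^2$ competing with the $(q-1)$--coefficient from integration by parts. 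A short computation shows $\inf_\b(\tfrac{1}{\b}-1)^2/c_1(\b)=\tfrac{n-4}{2}$ for $n\ge5$, so both absorptions succeed only once $q>\tfrac{n-2}{2}$; for $n\le4$ one can take $\b$ close to $1$ and the issue disappears. Thus for $n\ge5$ you still owe an initial $L^{q_0}$ bound for some $q_0>\tfrac{n-2}{2}$ before the iteration can start, and your outline does not supply it (it can be obtained from the $F^{q+1}$ term by a reverse-H\"older/absorbing-radius argument, but this is an extra step). The paper's $J$-trick sidesteps this entirely, which is its main advantage; the advantage of your route is that it is closer to the standard De~Giorgi--Nash--Moser machinery and reuses the Dai--Wei--Zhang Sobolev inequality directly.
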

	
	Before we prove Theorem \ref{3001}, we briefly mention  main strategy in the proof. The key obversation is that one can remove the Ricci item by an evolutionary equation  which links an undetermined positive function that also depends on time. For this, we define $\l=\d-\partial_t$ and compute the corresponding evolutionary equation of auxiliary function. The strategy was first used for heat equation by Zhang-Zhu \cite{ZZ1,ZZ2}. For Allen-Cahn equation, we have the following lemma.
	
	\begin{lem}\label{301}
		Let $u$ be a positive smooth solution of \eqref{AC} on a domain $\O\subset\M$ and $F$ be defined by \eqref{F}. For an undetermined positive function $J(x,t)\in C^{2,1}(\O\times[0,\infty))$, there exist $\delta=\delta(n)\in(0,1)$ and $\b=\b(n)\in(0,1)$ such that
		\begin{equation}
			\l(JF)\ge\left(\l J-5\frac{|\nabla J|^2}{J\delta}-2Ric^{-}\right)F+2\left(\frac{1}{\b}-1\right)\left\langle\nabla (JF),\nabla\ln w\right\rangle +2JF^2.
		\end{equation}

	\end{lem}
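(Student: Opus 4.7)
The plan is to differentiate and apply Proposition \ref{k1} directly. Writing $\l(JF)=\Delta(JF)-\partial_t(JF)$ and using the time-independence of $F$, the Leibniz rule gives
$$\l(JF) = J\,\Delta F + F\,\l J + 2\langle\nabla J,\nabla F\rangle.$$
I would substitute Proposition \ref{k1} into $J\,\Delta F$, keeping the three sign-positive ingredients explicit for later absorption: the trace-free Hessian square $2w^{-2}|\nabla^2 w-(\Delta w/n)g|^2$, the nonnegative constant $(2\beta^2/n)(1-u^2)^2$, and (for $\beta<n-1$) the Allen-Cahn linear term $\bigl(\tfrac{4}{n}(1+\beta)+4(1-\tfrac{1+\beta}{n})u^2\bigr)F$, while estimating $2w^{-2}Ric(\nabla w,\nabla w)\ge -2Ric^- F$.

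Next, to match the gradient term appearing in the statement, I would use the product rule $J\langle\nabla F,\nabla\ln w\rangle = \langle\nabla(JF),\nabla\ln w\rangle - F\langle\nabla J,\nabla\ln w\rangle$. This surfaces the cross term $2(\tfrac{1}{\beta}-1)\langle\nabla(JF),\nabla\ln w\rangle$ exactly as asserted, at the price of an error $-2(\tfrac{1}{\beta}-1)F\langle\nabla J,\nabla\ln w\rangle$. Together with the Leibniz leftover $2\langle\nabla J,\nabla F\rangle$, these are the two quantities that must be absorbed into the allowed $\tfrac{|\nabla J|^2}{J}F$ and $JF^2$ buckets.

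The first error is easy: since $|\nabla\ln w|^2=F$, Young's inequality with parameter $\epsilon_1$ gives
$$\bigl|2(\tfrac{1}{\beta}-1)F\langle\nabla J,\nabla\ln w\rangle\bigr|\le\frac{(\tfrac{1}{\beta}-1)^2}{\epsilon_1}\frac{|\nabla J|^2}{J}F+\epsilon_1 JF^2.$$
The main obstacle is $2\langle\nabla J,\nabla F\rangle$, because $\nabla F$ contains a Hessian of $w$. Using the identity $\nabla F = 2w^{-2}\nabla^2 w(\nabla w,\cdot)-2F\nabla\ln w$ one gets $|\nabla F|^2\le C(n)\bigl(w^{-2}|\nabla^2 w|^2\,F+F^3\bigr)$; then splitting $|\nabla^2 w|^2 = |\nabla^2 w-(\Delta w/n)g|^2+(\Delta w)^2/n$ and invoking \eqref{weq} to rewrite $\Delta w$ in terms of $F$ and $(1-u^2)$, the non-Hessian part of $|\nabla F|^2/F$ is controlled by $F^2$ and $(1-u^2)^2$. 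A Young's inequality on $2\langle\nabla J,\nabla F\rangle$ with small parameter $\epsilon_2$ then yields another $\tfrac{|\nabla J|^2}{J}F$ contribution plus controlled multiples of the trace-free Hessian square, $JF^2$, and $J(1-u^2)^2$, all of which are absorbed by the three non-negative items we retained from Proposition \ref{k1}.

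To close, I would choose $\beta=\beta(n)\in(0,1)$ with $\beta<\tfrac{1}{\sqrt{2n}-1}$ so that $\tfrac{2}{n}(1+\tfrac{1}{\beta})^2-2$ exceeds $2$ by a definite margin, comfortably covering the $\epsilon_1 JF^2$ and $\epsilon_2 JF^2$ losses and leaving a clean coefficient $2$ in front of $JF^2$; such $\beta$ exists for every $n\ge 1$. Then $\delta=\delta(n)\in(0,1)$ is picked so that the total of the two accumulated coefficients of $\tfrac{|\nabla J|^2}{J}F$ is bounded by $5/\delta$, and the three nonnegative Proposition \ref{k1} ingredients that absorb the error terms remain nonnegative. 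This assembles the inequality in the stated form.
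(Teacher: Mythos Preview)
Your argument is correct and follows essentially the paper's approach. The only difference is organizational: the paper starts from \eqref{p5}--\eqref{p6} (retaining the full Hessian $|\nabla^2 w|^2$) rather than from Proposition~\ref{k1}, so the cross term $4w^{-2}\nabla^2 w(\nabla J,\nabla w)$ in $2\langle\nabla J,\nabla F\rangle$ is absorbed directly into a $2J\delta\,w^{-2}|\nabla^2 w|^2$ sacrifice \emph{before} applying \eqref{p7}, which makes the constant $5/\delta$ (from three Young applications all with the same parameter $\delta$) fall out naturally and avoids your detour through $|\nabla F|^2$ and the subsequent re-splitting of the Hessian into trace-free and trace parts.
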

	\begin{proof}[\textbf{Proof}]
		By chain rule, \eqref{p5} and \eqref{p6}, we have
		\begin{eqnarray}\label{311}
			\l (JF)&=&\l J\cdot F+2\left\langle\nabla J,\nabla F\right\rangle+
			J\left(2w^{-2}|\nabla^2 w|^2+2w^{-2}Ric(\nabla w,\nabla w)-2F^2+4u^2F\right)\nonumber\\
			&&+2\left(\frac{1}{\b}-1\right)\left\langle\nabla (JF),\nabla\ln w\right\rangle-2F\left(\frac{1}{\b}-1\right)\left\langle\nabla J,\nabla\ln w\right\rangle.
		\end{eqnarray}
		
		First, for any $\delta>0$, Cauchy-Schwarz inequality and basic inequality give
		\begin{eqnarray}\label{312}
			2\left\langle\nabla J,\nabla F\right\rangle&=&4w^{-2}\nabla^2w(\nabla J,\nabla w)-4w^{-3}\left\langle\nabla J,\nabla w\right\rangle|\nabla w|^2\nonumber\\
			&\ge&-\left(2J\delta\frac{|\nabla^2 w|^2}{w^2}+\frac{2|\nabla J|^2}{J\delta}\cdot\frac{|\nabla w|^2}{w^2}\right)-\left(2J\delta\frac{|\nabla w|^2}{w^2}+\frac{2|\nabla J|^2}{J\delta}\right)\frac{|\nabla w|^2}{w^2},\nonumber\\
			&&	
		\end{eqnarray}
		and 
		\begin{eqnarray}\label{313}
			-2F\left(\frac{1}{\b}-1\right)\left\langle\nabla J,\nabla\ln w\right\rangle
			&\ge&-J\delta\left(\frac{1}{\b}-1\right)^2\frac{|\nabla w|^4}{w^4}-\frac{|\nabla J|^2}{J\delta}\cdot\frac{|\nabla w|^2}{w^2}.
		\end{eqnarray}
		
		Substituting \eqref{312} and \eqref{313} into \eqref{311} and using \eqref{p7}, we obtain
		\begin{eqnarray}
			\l(JF)&\ge&\left(\l J-5\frac{|\nabla J|^2}{J\delta}-2Ric^{-}\right)F+2\left(\frac{1}{\b}-1\right)\left\langle\nabla (JF),\nabla\ln w\right\rangle\nonumber\\
			&&+\left(\frac{2(1-\delta)}{n}\left(1+\frac{1}{\b}\right)^2-2-2\delta-\delta\left(\frac{1}{\b}-1\right)^2\right)JF^2\nonumber\\
			&&+\left(\frac{4(1-\delta)}{n}(1+\b)(1-u^2)+4u^2\right)JF.
		\end{eqnarray}
		Then we complete the proof by choosing $\delta=\frac{1}{5n}$ and $\b$ sufficiently small and positive.

	\end{proof}
	Now, we define 
	$\mathscr{F}(\delta,J)=\l J-5\frac{|\nabla J|^2}{J\delta}-2Ric^{-}$,
	and the following technical lemma help us remove the item $\mathscr{F}(\delta,J)$ in Lemma \ref{301}. 
	
	\begin{lem}\label{C3}\cite[Claim]{ZZ2}\cite[Lemma 4.2]{LU0}
		Let $(\mathbf{M^{n}},g)$ be a Riemannian manifold. Then for any $p>\frac{n}{2}$, there exists a $k=k(n,p)$ such that when $k(p,R)\le k$,  the following problem has unique positive solution on $B(x_0,R)\times[0,\infty)$ for any $\delta\in(0,\frac{1}{5})$,
		\begin{eqnarray}\label{a1}
			\begin{cases}
				\mathscr{F}(\delta,J)=0\qquad\,\, \text{on} \quad B(x_0,R)\times(0,\infty)\\
				J(\cdot,0)=1\quad\qquad \text{on} \quad B(x_0,R)\\
				J(\cdot,t)=1 \quad\qquad \,\text{on} \quad \partial B(x_0,R).\\
			\end{cases}
		\end{eqnarray}
		The solution of \eqref{a1} has the following decay estimate:
		\begin{equation}
			J_R(t)\le J(x,t)\le 1,
		\end{equation}
		where
		\begin{eqnarray}\label{jr}
			J_R(t)=2^{\frac{-1}{5\delta^{-1}-1}} \cdot  \exp\left\{-Ck\Big(1+[C(5\delta^{-1}-1)k]^{\frac{n}{2p-n}}\Big)R^{-2} t\right\}
		\end{eqnarray}
		and $C=C(n,p)$. 	
	\end{lem}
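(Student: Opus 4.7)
The plan is to linearize the quasilinear parabolic equation $\mathscr{F}(\delta,J)=0$ by a power substitution, apply the parabolic maximum principle for the upper bound $J\le 1$, and invoke a Kato-class heat-kernel estimate for the lower bound. Concretely, set $J=v^{a}$; a direct computation gives
\[
\d(v^{a})-\partial_{t}(v^{a})-\frac{5|\nabla (v^{a})|^{2}}{\delta\,v^{a}}
=a\,v^{a-1}\bigl(\d v-\partial_{t}v\bigr)+\Bigl(a(a-1)-\tfrac{5a^{2}}{\delta}\Bigr)v^{a-2}|\nabla v|^{2},
\]
so the quadratic gradient term cancels exactly when $a(a-1)=5a^{2}/\delta$, i.e.\ $a=\delta/(\delta-5)=-\delta/(5-\delta)\in(-\tfrac{1}{24},0)$ for $\delta\in(0,\tfrac{1}{5})$. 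Problem \eqref{a1} is then equivalent to the semilinear heat problem
\[
\partial_{t}v-\d v=\tfrac{2}{-a}\,Ric^{-}\,v^{1-a}\quad\text{on } B(x_{0},R)\times(0,\infty),\qquad v|_{t=0}=v|_{\partial B(x_{0},R)}=1,
\]
with $-2/a>0$ and nonlinear exponent $1-a=5/(5-\delta)\in(1,\tfrac{25}{24}]$.

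Because the right-hand side is non-negative and $v\equiv 1$ matches the parabolic-boundary data, $v\equiv 1$ is a subsolution, and the parabolic maximum principle yields $v\ge 1$, hence $J=v^{a}\le 1$. For the lower bound I argue by bootstrap: write $v=1+w$ with $w\ge 0$ vanishing on the parabolic boundary, so that $\partial_{t}w-\d w=\tfrac{2}{-a}(1+w)^{1-a}Ric^{-}$. Under the a priori assumption $w\le 1$ the factor $(1+w)^{1-a}\le 2^{1-a}$ is bounded, and Duhamel's formula against the Dirichlet heat kernel $G_{R}$ on $B(x_{0},R)$ gives
\[
w(x,t)\le\frac{2\cdot 2^{1-a}}{-a}\int_{0}^{t}\!\!\int_{B(x_{0},R)}G_{R}(x,y,t-s)\,Ric^{-}(y)\,dy\,ds.
\]
The Li--Yau/Dai--Wei--Zhang on-diagonal heat-kernel bounds, valid under the smallness hypothesis $k(p,R)\le k$, convert the $L^{p}$-control of $Ric^{-}$ into the linear-in-$t$ estimate $w(x,t)\le C(n,p)\,k\bigl(1+[C(5\delta^{-1}-1)k]^{n/(2p-n)}\bigr)R^{-2}\,t$, exactly as in \cite[Claim]{ZZ2} and \cite[Lemma 4.2]{LU0}. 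Choosing $k=k(n,p)$ sufficiently small closes the bootstrap globally; exponentiating produces $v\le 2\exp(\mu t)$ with $\mu=C(n,p)\,k(1+[C(5\delta^{-1}-1)k]^{n/(2p-n)})R^{-2}$, and raising to the negative power $a$ yields
\[
J=v^{a}\ge 2^{a}\,e^{a\mu t}=2^{-1/(5\delta^{-1}-1)}\exp\bigl\{-\tfrac{\delta}{5-\delta}\mu t\bigr\},
\]
which is precisely the $J_{R}(t)$ of \eqref{jr}. With $J$ pinned in $[J_{R}(t),1]$, the coefficients of the $v$-equation are smooth and Lipschitz in $v$, so standard quasilinear parabolic theory (short-time fixed-point combined with the global a priori bounds) delivers existence and uniqueness of the positive classical solution on $B(x_{0},R)\times[0,\infty)$.

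The main technical obstacle is controlling the semilinear factor $v^{1-a}Ric^{-}$: because $Ric^{-}$ lies only in $L^{p}$, it cannot be bounded pointwise, and the passage from $L^{p}$-smallness of $Ric^{-}$ to an $L^{\infty}$ estimate on $w$ is exactly what forces the bootstrap and the Kato-class heat-kernel machinery. The smallness hypothesis $k(p,R)\le k(n,p)$ is there precisely to make this step valid; and the peculiar dependence of $\mu$ on $\delta$ through the factor $5\delta^{-1}-1=(5-\delta)/\delta$ traces back to the nonlinearity exponent $1-a=5/(5-\delta)$ produced by the linearizing substitution.
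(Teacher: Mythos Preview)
The paper does not supply its own proof of this lemma; it is quoted from \cite{ZZ2} and \cite{LU0}. Your substitution $J=v^{a}$ with $a=\delta/(\delta-5)\in(-\tfrac{1}{24},0)$ is the standard one, and the maximum-principle argument for $J\le 1$ (equivalently $v\ge 1$) is correct.

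The lower-bound argument, however, has a genuine gap. You work under the a~priori hypothesis $w=v-1\le 1$, obtain from Duhamel and the heat-kernel bound the linear estimate $w(x,t)\le \mu t$ with $\mu=Ck(1+\cdots)R^{-2}$, and then assert that choosing $k$ small ``closes the bootstrap globally'' and that ``exponentiating produces $v\le 2e^{\mu t}$''. Neither step is valid. The conclusion $w\le\mu t$ cannot recover the hypothesis $w\le 1$ once $t>\mu^{-1}$, and no smallness of $k$ (hence of $\mu$) prevents this for large $t$; a single bootstrap on all of $[0,\infty)$ is therefore impossible. Moreover $v=1+w\le 1+\mu t$ is simply not $2e^{\mu t}$---there is nothing to ``exponentiate''. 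Since the target bound $v\le J_R(t)^{1/a}$ is genuinely unbounded in $t$, a correct proof must build in this growth. The standard remedy, and the one carried out in the cited references, is to iterate in time: run your Duhamel/heat-kernel estimate on successive intervals of a fixed length $\tau$, choosing $k$ small enough that the solution at most doubles on each interval; the resulting geometric growth $v(\cdot,m\tau)\le 2^{m}$ then yields the exponential bound, and the prefactor $2^{-1/(5\delta^{-1}-1)}=2^{a}$ in \eqref{jr} is exactly the per-step factor transported back through $J=v^{a}$.
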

	Before we start to prove Theorem \ref{3001}, we need a cut-off function under integral curvature condition from Dai-Wei-Zhang \cite[Lemma 5.3]{DWZ}, here we use its scaling version.
	\begin{lem}\label{ecf}
		Let $(\mathbf{M^{n}},g)$ be a Riemannian manifold. Then for 
		any $p>\frac{n}{2}$, there exists constants $k=k(n,p)$ and $C=C(n,p)$ such that if $k(p,R)\le k$, then for any geodesic ball $B(x_0,2R)$, there exists $\phi\in C_0^{\infty}(B(x_0,2R))$ satisfying $0\le\phi\le 1$, $\phi=1$ in $B(x,R)$, and $|\nabla\phi|^2+|\Delta\phi|\le\frac{C}{R^2}$.  	
	\end{lem}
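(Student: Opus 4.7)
The plan is to derive this scaling version from the unscaled cut-off construction in Dai–Wei–Zhang \cite{DWZ} by a constant conformal rescaling of the metric, exploiting the scale invariance of $k(p,R)$ in the $K=0$ case recorded at the start of the section. First I would set $\tilde g = R^{-2} g$, so that $B_g(x_0,2R) = B_{\tilde g}(x_0,2)$ as point sets and the cut-off problem is transported to one on the unit-radius ball. The hoped-for payoff is that Dai–Wei–Zhang's lemma, proved on unit-scale balls, hands me a cut-off $\tilde\phi$ on $B_{\tilde g}(x_0,2)$ with derivative bounds independent of $R$, and I only need to unscale.

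Next I would verify $k_{\tilde g}(p,1) = k_g(p,R)$. Because the rescaling is by a constant, Christoffel symbols are unchanged and $\mathrm{Ric}$ as a $(0,2)$-tensor is unchanged; the Ricci endomorphism used in the definition of $\rho(x)$, however, raises an index via $\tilde g^{ij} = R^{2} g^{ij}$, so its smallest eigenvalue satisfies $\rho_{\tilde g}(x) = R^{2} \rho_g(x)$ and hence $\mathrm{Ric}^-_{\tilde g} = R^{2} \mathrm{Ric}^-_{g}$. Combined with $d\tilde V = R^{-n}\, dV$ and $|B_{\tilde g}(x,1)|_{\tilde g} = R^{-n} |B_g(x,R)|_g$, the definition of $k$ yields $k_{\tilde g}(x,p,1) = R^{2} \left(|B_g(x,R)|_g^{-1}\int_{B_g(x,R)}(\mathrm{Ric}^-_g)^p\,dV\right)^{1/p} = k_g(x,p,R)$. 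Thus with $k = k(n,p)$ equal to the smallness threshold from the unscaled Dai–Wei–Zhang lemma, the hypothesis $k_g(p,R) \le k$ becomes $k_{\tilde g}(p,1) \le k$, and their lemma applies on $(\mathbf{M}^n,\tilde g)$ to produce $\tilde\phi \in C_0^\infty(B_{\tilde g}(x_0,2))$ with $0 \le \tilde\phi \le 1$, $\tilde\phi \equiv 1$ on $B_{\tilde g}(x_0,1)$, and $|\tilde\nabla \tilde\phi|_{\tilde g}^2 + |\tilde\Delta \tilde\phi| \le C(n,p)$.

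Finally I would set $\phi := \tilde\phi$ as a function on $\mathbf{M}^n$ and translate the bounds back to $g$. Since $\tilde g^{ij} = R^{2} g^{ij}$ and Christoffel symbols agree, one has $|\nabla_g \phi|_g^2 = R^{-2} |\tilde\nabla \phi|_{\tilde g}^2$ and $\Delta_g \phi = R^{-2}\tilde\Delta \phi$, giving $|\nabla\phi|^2 + |\Delta\phi| \le C(n,p)\, R^{-2}$ at once. The only delicate point in this reduction is the scale-invariance accounting for $k(p,R)$; the substantive analytic content — a mollification of $d(x_0,\cdot)$ combined with the integral Laplacian comparison of Petersen–Wei type to control $\Delta d$ under a small $L^p$ Ricci deficit — lives inside \cite{DWZ} and does not need to be redone here.
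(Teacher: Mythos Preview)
Your proposal is correct and aligns with the paper's treatment: the paper does not supply a proof of this lemma at all but simply cites \cite[Lemma~5.3]{DWZ} and remarks ``here we use its scaling version.'' Your argument is precisely the routine conformal rescaling $\tilde g = R^{-2}g$ that the phrase ``scaling version'' is shorthand for, and your bookkeeping of how $\rho$, $\mathrm{Ric}^-$, volumes, gradients, and Laplacians transform is accurate; in particular the verification $k_{\tilde g}(p,1)=k_g(p,R)$ matches the scale-invariance of $k(p,R)$ noted at the start of Section~\ref{S3}.
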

	
	\begin{proof}[\textbf{Proof of Theorem \ref{3001}}]
		First, we take $\O=B(x_0,2R)$ in Lemma \ref{301} and set the following auxiliary function	
		\begin{equation}
			G(x,t)=t\Phi(x)J(x,t)F(x)\qquad\text{on $B(x_0,2R)\times[0,\infty)$,}
		\end{equation}
		where $\Phi\in C_0^{\infty}(B(x_0,2R))$ is an undetermined function and $J(x,t)\in C^{2,1}(B(x_0,2R)\times[0,\infty))$ is the unique solution of \eqref{a1} on $B(x_0,2R)\times[0,\infty)$.
		
		For any fixed $T>0$, we consider the maximal value of $G$ on $B(x_0,2R)\times[0,T]$, and denote the maximal value point $(\bx,\bt)$. Without of loss generality, we can assume $G(\bx,\bt)>0$, and hence $\bt>0$. At this point, we have
		\begin{eqnarray}\label{bb}
			0\ge\l G(\bx,\bt)&=&(\bt\d \Phi-\Phi)JF(\bx,\bt)+\bt\Phi\l(JF)+2\left\langle\nabla(tJF),\nabla\Phi\right\rangle(\bx,\bt)\nonumber\\
			&=&\left(\frac{\d\Phi}{\Phi}-\frac{1}{\bt}\right)G(\bx,\bt)+\bt\Phi \l(JF)-2\frac{|\nabla \Phi|^2}{\Phi^2}G(\bx,\bt).
		\end{eqnarray}
		By Lemma \ref{C3}, \eqref{bb} and Lemma \ref{301} with $\b=\b(n)$ and $\delta=\delta(n)$, we obtain
		\begin{eqnarray}
			0&\ge&\left(\frac{\d\Phi}{\Phi}-\frac{1}{\bt}-2\frac{|\nabla \Phi|^2}{\Phi^2}\right)G(\bx,\bt)+\bt\Phi \left(2JF^2+2\left(\frac{1}{\b}-1\right)\left\langle\nabla (JF),\nabla\ln w\right\rangle\right)(\bx,\bt)\nonumber\\
			&=&\left(\frac{\d\Phi}{\Phi}-\frac{1}{\bt}-2\frac{|\nabla \Phi|^2}{\Phi^2}\right)G(\bx,\bt)+2\bt\Phi JF^2- 2\left(\frac{1}{\b}-1\right)\left\langle\nabla \Phi,\nabla\ln w\right\rangle\frac{G}{\Phi}(\bx,\bt),\nonumber
		\end{eqnarray}
		which is equivalent to 
		\begin{equation}\label{305}
			0\ge\left(\bt\d \Phi-\Phi-2\bt\frac{|\nabla\Phi|^2}{\Phi}\right)JG(\bx,\bt)+2G^2(\bx,\bt)- 2\left(\frac{1}{\b}-1\right)\left\langle\nabla \Phi,\nabla\ln w\right\rangle \bt JG(\bx,\bt).
		\end{equation}
		Cauchy-Schwarz inequality and basic inequality  give
		\begin{eqnarray}\label{cs}
			- 2\left(\frac{1}{\b}-1\right)\left\langle\nabla \Phi,\nabla\ln w\right\rangle \bt JG(\bx,\bt)&\ge&-2\left|\frac{1}{\b}-1\right||\nabla\Phi|F^{\frac{1}{2}}\cdot \bt JG(\bx,\bt)\nonumber\\
			&\ge&-G^2(\bx,\bt)-\left(\frac{1}{\b}-1\right)^2\frac{|\nabla\Phi|^2}{\Phi}\cdot \bt JG(\bx,\bt).
		\end{eqnarray}
		Substituting \eqref{cs} into \eqref{305} yields
		\begin{equation}\label{306}
			G(\bx,\bt)\le\left(\bt|\d\Phi|+\Phi+\left(2+\left(\frac{1}{\b}-1\right)^2\right)\bt\frac{|\nabla\Phi|^2}{\Phi}\right)J(\bx,\bt),
		\end{equation}
		then we choose $\Phi=\phi^2$, where $\phi\in C^{\infty}(B(x_0,2R))$ is same as in Lemma \ref{ecf}, and we have
		\begin{equation}\label{307}
			G(\bx,\bt)\le C(n,p)\left(1+\frac{T}{R^2}\right),
		\end{equation}
		which is due to $\bt\le T$, $\phi(\bx)\le 1$ and $J(\bx,\bt)\le 1$. Hence
		\begin{equation}\label{308}
			TJ_{2R}(T)\sup_{B(x_0,R)}F(x)\le\sup_{B(x_0,R)}TJ(x,T)F(x)\le G(\bx,\bt)\le C(n,p)\left(1+\frac{T}{R^2}\right),
		\end{equation}
		where $J_{2R}(T)$ is defined by \eqref{jr}. Because $T$ is arbitrary, we set $T=R^2$ in \eqref{308} and obtain
		\begin{equation}
			\sup_{B(x_0,R)}F\le Ce^{Ck(1+Ck^{\frac{n}{2p-n}})}\cdot\frac{1}{R^2}=\frac{C}{R^2},
		\end{equation}
		where $C=C(n,p)$.

	\end{proof}

	\section{\textbf{Generalization of nonlinear item}}\label{S4}
	In this section, we generalize previous result for equation \eqref{AC} to the following nonlinear equation
	\begin{equation}\label{LEC}
		\Delta u+au^{s}-bu^t=0,
	\end{equation}
	where $a\ge0,b\ge0$ and $s\neq t$.
	
	Via same transformation as \eqref{tran} and auxiliary function $F$ as \eqref{F}, similar computation as in the proof of Proposition \ref{k1} gives the following proposition. 
	
	\begin{prop}\label{k}
		Let $u$ be a positive smooth solution of \eqref{LEC} on a domain $\O\subset\M$ and $F$ be defined by \eqref{F}. Then we have
		\begin{eqnarray}\label{4k}
			\Delta F&=&2w^{-2}\left|\nabla^2w-\frac{\Delta w}{n} g\right|^2+2w^{-2}Ric(\nabla w,\nabla w)+2\left(\frac{1}{\b}-1\right)\left\langle\nabla F,\nabla \ln w\right\rangle\nonumber\\
			&&+\left(au^{s-1}\left(\frac{4}{n}(1+\b)+2(1-s)\right)-bu^{t-1}\left(\frac{4}{n}(1+\b)+2(1-t)\right)\right)\frac{|\nabla w|^2}{w^2}\nonumber\\
			&&+\left(\frac{2}{n}\left(1+\frac{1}{\b}\right)^2-2\right)\frac{|\nabla w|^4}{w^4}+\frac{2\b^2}{n}\left(au^{s-1}-bu^{t-1}\right)^2.
		\end{eqnarray}	
	\end{prop}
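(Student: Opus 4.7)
My plan is to follow the proof of Proposition \ref{k1} line by line, replacing the specific nonlinearity $u-u^3$ by the general $au^s-bu^t$. The entire computation is driven by a single algebraic identity that encodes the PDE into an expression for $\Delta w$, so the first task is to derive that identity in the general setting.

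The first step is to compute $\Delta w$ for $w=u^{-\beta}$ using \eqref{LEC}. From $\nabla w=-\beta u^{-\beta-1}\nabla u$ I get $|\nabla u|^2=\frac{1}{\beta^2}u^{2\beta+2}|\nabla w|^2$, hence by the chain rule
\begin{equation*}
\Delta w=-\beta u^{-\beta-1}\Delta u+\beta(\beta+1)u^{-\beta-2}|\nabla u|^2=-\beta u^{-\beta-1}\Delta u+\Bigl(1+\tfrac{1}{\beta}\Bigr)\frac{|\nabla w|^2}{w},
\end{equation*}
where I used $u^{-\beta}=w$. Substituting $\Delta u=bu^t-au^s$ from \eqref{LEC} and pulling out a factor $u^{-\beta}=w$ yields the generalized version of \eqref{weq}:
\begin{equation*}
\Delta w=\Bigl(1+\tfrac{1}{\beta}\Bigr)\frac{|\nabla w|^2}{w}+\beta w\bigl(au^{s-1}-bu^{t-1}\bigr).
\end{equation*}
This is the only place where the nonlinearity enters; the remainder of the proof of Proposition \ref{k1} is a purely formal calculation in the variable $w$ once $\Delta w$ has this shape.

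The second step is to repeat the chain-rule decomposition \eqref{p1} for $F=|\nabla w|^2/w^2$ and compute the three pieces \eqref{p2}-\eqref{p4} with the new $\Delta w$. Formula \eqref{p3} is unchanged. In \eqref{p2}, the contribution from $\Delta w$ becomes $-2\beta(au^{s-1}-bu^{t-1})|\nabla w|^2/w^2$ in place of $-2\beta(1-u^2)|\nabla w|^2/w^2$. The Bochner identity \eqref{p4} requires $\langle\nabla\Delta w,\nabla w\rangle$; the quadratic-in-$|\nabla w|$ piece behaves exactly as before, while the new piece is
\begin{equation*}
\nabla\bigl[\beta w(au^{s-1}-bu^{t-1})\bigr]=\beta(au^{s-1}-bu^{t-1})\nabla w+\beta w\bigl(a(s-1)u^{s-2}-b(t-1)u^{t-2}\bigr)\nabla u.
\end{equation*}
Using $\nabla u=-\tfrac{1}{\beta}u^{\beta+1}\nabla w$ and $wu^{\beta}=1$, the second summand simplifies to $-\bigl(a(s-1)u^{s-1}-b(t-1)u^{t-1}\bigr)\nabla w$. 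Taking the inner product with $\nabla w$ and multiplying by $2w^{-2}$ gives the coefficients $2a(1-s)u^{s-1}$ and $-2b(1-t)u^{t-1}$ that appear in \eqref{4k}.

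The third step is to invoke \eqref{p7} with the updated $\Delta w$: expanding the square produces the term $\tfrac{2\beta^2}{n}(au^{s-1}-bu^{t-1})^2$ (the generalization of $\tfrac{2\beta^2}{n}(1-u^2)^2$), the familiar $\tfrac{2}{n}(1+\tfrac{1}{\beta})^2|\nabla w|^4/w^4$, and the cross term $\tfrac{4(1+\beta)}{n}(au^{s-1}-bu^{t-1})|\nabla w|^2/w^2$. Combining this last cross term with the $|\nabla w|^2/w^2$ contributions from the previous step gives precisely the coefficient $au^{s-1}\bigl(\tfrac{4}{n}(1+\beta)+2(1-s)\bigr)-bu^{t-1}\bigl(\tfrac{4}{n}(1+\beta)+2(1-t)\bigr)$ in \eqref{4k}. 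The reduction $4\bigl(\tfrac{1}{\beta}-1\bigr)w^{-3}\nabla^2w(\nabla w,\nabla w)=2\bigl(\tfrac{1}{\beta}-1\bigr)\langle\nabla F,\nabla\ln w\rangle+4\bigl(\tfrac{1}{\beta}-1\bigr)|\nabla w|^4/w^4$ from \eqref{p6} is unaffected by the nonlinearity and produces the cross term $2(\tfrac{1}{\beta}-1)\langle\nabla F,\nabla\ln w\rangle$ as well as the remaining piece of the $|\nabla w|^4/w^4$ coefficient. Collecting everything yields \eqref{4k}. The main obstacle is purely organizational: one must carefully track the bookkeeping when $s$ and $t$ enter through differentiating $u^{s-1}$ and $u^{t-1}$ and verify that the two separate contributions to the $|\nabla w|^2/w^2$ coefficient combine into the symmetric expression displayed, but nothing new beyond the method of Proposition \ref{k1} is required.
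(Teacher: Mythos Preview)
Your proposal is correct and is exactly what the paper does: it states that Proposition~\ref{k} follows from ``similar computation as in the proof of Proposition~\ref{k1}'' via the same transformation $w=u^{-\beta}$ and auxiliary function $F$, and your line-by-line substitution of $au^{s-1}-bu^{t-1}$ for $1-u^2$ in \eqref{weq}--\eqref{p7} reproduces this computation faithfully. The bookkeeping you describe (the cancellation of the $\pm 2\beta(au^{s-1}-bu^{t-1})$ contributions from \eqref{p2} and \eqref{p4}, and the combination of the cross term from \eqref{p7} with the $2(1-s)$, $2(1-t)$ pieces from differentiating $u^{s-1}$, $u^{t-1}$) is accurate.
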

	Before futher statement, for any real number $n\in[1,\infty)$, we define function
	\begin{eqnarray}
		p(n)=\begin{cases}
			\infty    \quad\qquad\qquad\text{if}\quad n=1\nonumber\\
			\frac{n+3}{n-1}\qquad\qquad\,\,\text{if}\quad n\in(1,\infty).\nonumber
		\end{cases}
	\end{eqnarray}
	
	By Proposition \ref{k}, we immediately get the following key lemma.
	
	\begin{lem}\label{kl}
		Let $u$ be a positive smooth solution of \eqref{LEC} on a domain $\O\subset\M$ and $F$ be defined by \eqref{F}. For the following cases:\\
		{\rm(I)} $s\in (-\infty,1]$ and $t>1$,\\
		{\rm(II)} $s\in(1,p(n))$ and $t>s$,\\
		there exists $\b=\b(n,s,t)>0$ such that 
		\begin{equation}\label{43}
			\Delta F\ge 2w^{-2}Ric(\nabla w,\nabla w)+2\left(\frac{1}{\b}-1\right)\left\langle\nabla F,\nabla \ln w\right\rangle+LF^2\quad\text{on\,\, $\O$,}
		\end{equation}
		where $L=L(n,s,t)>0$.
	\end{lem}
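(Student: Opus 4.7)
The plan is to start from the identity of Proposition~\ref{k}, discard the non-negative term $2w^{-2}|\nabla^2 w - \frac{\Delta w}{n}g|^2$, and reduce the desired bound to a scalar algebraic inequality in the auxiliary quantities $P := au^{s-1}$, $Q := bu^{t-1}$, and $F$. Writing $c_s := \frac{4}{n}(1+\beta)+2(1-s)$, $c_t := \frac{4}{n}(1+\beta)+2(1-t)$, and $D := \frac{2}{n}(1+\frac{1}{\beta})^2 - 2$, the lemma reduces to finding $\beta>0$ (with $\beta < \frac{1}{\sqrt{n}-1}$ when $n > 1$, so that $D>0$) and $L > 0$ such that
\[
(c_sP - c_tQ)F + DF^2 + \frac{2\beta^2}{n}(P-Q)^2 \;\ge\; LF^2
\]
holds for every $P,Q,F \ge 0$.

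The decisive observation is algebraic. Since $c_s - c_t = 2(t-s) > 0$, setting $R := P-Q$ gives the two rewrites $c_sP - c_tQ = 2(t-s)Q + c_s R = 2(t-s)P + c_t R$. Dropping the non-negative leading term in whichever rewrite I prefer reduces the inequality to the pointwise positivity of the quadratic form $(D-L)F^2 + c_\ast RF + \frac{2\beta^2}{n}R^2$ on $(F,R) \in \mathbb{R}^2$, with $c_\ast \in \{c_s, c_t\}$ of my choice. By the discriminant criterion this form is positive semi-definite for some $L>0$ iff $c_\ast^2 < \frac{8\beta^2 D}{n}$.

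For case~(II), $s \in (1, p(n))$, I take $c_\ast = c_s$ and $\beta := (s-1)/2$. A direct computation then yields
\[
c_s^2 - \frac{8\beta^2 D}{n} \;=\; \frac{4}{n}\bigl[(n-1)s^2 - 2(n+1)s + (n+3)\bigr] \;=\; \frac{4(n-1)}{n}(s-1)\bigl(s-p(n)\bigr)
\]
for $n>1$; the discriminant of the bracketed quadratic collapses cleanly to $16$, producing exactly the two roots $s = 1$ and $s = \frac{n+3}{n-1} = p(n)$ that define the admissible range (for $n=1$ the bracket degenerates to $4(1-s)$, consistent with $p(1)=\infty$). Since $s \in (1, p(n))$, the right-hand side is strictly negative, so the required $L = L(n,s) > 0$ exists.

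For case~(I), $s \le 1 < t$, I split on $t$. When $1 < t < p(n)$, the symmetric choice $c_\ast = c_t$ with $\beta := (t-1)/2$ yields the analogous factorization $\frac{4(n-1)}{n}(t-1)(t-p(n)) < 0$. When $t \ge p(n)$ (hence $t > 1+\frac{2}{n}$), I instead pick any $\beta$ in the nonempty interval $\bigl(0,\,\min\{\frac{n(t-1)}{2}-1,\,\frac{1}{\sqrt{n}-1}\}\bigr)$; this forces $c_t \le 0$ while $c_s > 0$ (the latter from $s\le 1$ and $\beta>0$), so $c_sP - c_tQ \ge 0$ pointwise and one may simply take $L = D > 0$, without even invoking the $(P-Q)^2$ term. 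The main technical step is the factorization displayed above — the appearance of $p(n)=(n+3)/(n-1)$ as the critical exponent is entirely extracted from the algebra there; once it is in hand, the admissible $\beta$ and the value of $L$ in each case are essentially forced.
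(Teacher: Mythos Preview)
Your argument is correct, and it takes a genuinely different route from the paper. Both proofs drop the traceless Hessian term in Proposition~\ref{k} and reduce matters to controlling $(c_sP-c_tQ)F+DF^2+\tfrac{2\beta^2}{n}(P-Q)^2$, but the paper proceeds by AM--GM type estimates across four sub-cases (splitting $(1,p(n))$ into $(1,1+\tfrac{2}{n}]$, $(1+\tfrac{2}{n},1+\tfrac{4}{n}]$, and $(1+\tfrac{4}{n},p(n))$, with different $\beta$-ranges in each), whereas you recast the problem as positive semi-definiteness of a single quadratic form in $(R,F)$ and solve it by a discriminant computation. Your treatment of Case~(II) is cleaner: the single choice $\beta=(s-1)/2$ produces the exact factorization $c_s^2-\tfrac{8\beta^2 D}{n}=\tfrac{4(n-1)}{n}(s-1)(s-p(n))$, which simultaneously handles the whole range and \emph{explains} why $p(n)=\tfrac{n+3}{n-1}$ is the threshold---something the paper's case analysis obscures. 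Conversely, for Case~(I) the paper is slightly more economical: it uses a single small-$\beta$ argument yielding the uniform constant $L=2$ for all $t>1$, while you split on $t\lessgtr p(n)$. Both trade-offs are minor; your approach has the conceptual advantage of making the critical exponent emerge from the algebra rather than from an external case split.
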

	\begin{proof}[\textbf{Proof}]
		Case 1:	$s\le 1$ and $t>1$. Then for any $\b\in(0,\frac{n}{2}(t-1)]$, we have
		\begin{eqnarray}\label{44}
			au^{s-1}\left(\frac{4}{n}(1+\b)+2(1-s)\right)-bu^{t-1}\left(\frac{4}{n}(1+\b)+2(1-t)\right)\ge\frac{4}{n}\left(au^{s-1}-bu^{t-1}\right).
		\end{eqnarray}	
		Notice that when $\b>0$ is sufficiently small, by basic inequality, one can see	
		\begin{eqnarray}\label{45}
			&&	\left(\frac{2}{n}\left(1+\frac{1}{\b}\right)^2-4\right)\frac{|\nabla w|^4}{w^4}+\frac{2\b^2}{n}\left(au^{s-1}-bu^{t-1}\right)^2\nonumber\\
			&&\ge\frac{4}{n}\sqrt{(1+\b)^2-2n\b^2}\cdot|au^{s-1}-bu^{t-1}|\cdot\frac{|\nabla w|^2}{w^2}\nonumber\\
			&&\ge\frac{4}{n}|au^{s-1}-bu^{t-1}|\cdot\frac{|\nabla w|^2}{w^2}.
		\end{eqnarray}	
		Substituting \eqref{44} and \eqref{45}	into \eqref{4k} gives \eqref{43}.
		
		Case 2:	$s\in (1,1+\frac{2}{n}]$ and $t>s$.
		Then for any $\b\in(0,\frac{n}{2}(s-1)]$, we have
		\begin{eqnarray}\label{46}
			&&au^{s-1}\left(\frac{4}{n}(1+\b)+2(1-s)\right)-bu^{t-1}\left(\frac{4}{n}(1+\b)+2(1-t)\right)\nonumber\\
			&&\ge \left(\frac{4}{n}(1+\b)+2(1-s)\right)\left(au^{s-1}-bu^{t-1}\right)\ge-\frac{4}{n}|au^{s-1}-bu^{t-1}|.
		\end{eqnarray}		
		Then same reason as Case 1 gives \eqref{43}.	
		
		Case 3:	$s\in (1+\frac{2}{n},1+\frac{4}{n}]$ and $t>s$.
		Then for any $\b\in(0,\frac{n}{2}(s-1)-1)$, we have
		\begin{eqnarray}\label{47}
			&&au^{s-1}\left(\frac{4}{n}(1+\b)+2(1-s)\right)-bu^{t-1}\left(\frac{4}{n}(1+\b)+2(1-t)\right)\nonumber\\
			&&\ge \left(\frac{4}{n}(1+\b)+2(1-s)\right)\left(au^{s-1}-bu^{t-1}\right)\ge-\frac{4}{n}|au^{s-1}-bu^{t-1}|.
		\end{eqnarray}		
		Then same reason as Case 1 gives \eqref{43}.	
		
		Case 4:	$s\in (1+\frac{4}{n},p(n))$ and $t>s$. For any $l\in(2,\infty)$ and $\b\in(0,\frac{4}{nl-2}]$, we have
		\begin{eqnarray}\label{48}
			&&	\left(\frac{2}{n}\left(1+\frac{1}{\b}\right)^2-l\right)\frac{|\nabla w|^4}{w^4}+\frac{2\b^2}{n}\left(au^{s-1}-bu^{t-1}\right)^2\nonumber\\
			&&\ge\frac{4}{n}\sqrt{(1+\b)^2-\frac{nl}{2}\b^2}\cdot|au^{s-1}-bu^{t-1}|\cdot\frac{|\nabla w|^2}{w^2}.
		\end{eqnarray}		
		Substituting \eqref{48} into \eqref{4k} yields	
		\begin{eqnarray}\label{2bb}
			\d F&\ge& 2w^{-2}Ric(\nabla w,\nabla w)+2\left(\frac{1}{\b}-1\right)\left\langle\nabla F,\nabla\ln w\right\rangle+(l-2)F^2	\nonumber\\
			&&+\left(g(\b,n,l)(au^{s-1}-bu^{t-1})-2(sau^{s-1}-tbu^{t-1})\right)\frac{|\nabla w|^2}{w^2},
		\end{eqnarray}
		where	
		\begin{equation}
			g(\b,n,l)=\frac{4}{n}(1+\b)+\frac{4}{n}\sqrt{(1+\b)^2-\frac{nl}{2}\b^2}+2.\nonumber
		\end{equation}
		Now, we choose $l>2$ such that $s-1=\frac{4l}{nl-2}$ and set $\b=\frac{4}{nl-2}$. Then \eqref{2bb} gives \eqref{43} with $L=l-2$.	
		
	\end{proof}
	\begin{rmk}
		\rm	The careful reader find Lemma \ref{kl} can be improved because we do not seriously deal with the cofficient of $\frac{|\nabla w|^2}{w^2}$. In many specific cases,  this cofficient can be positive and can yield Liouville theorem of \eqref{LEC} on negative curved space as Theorem \ref{main2}. Anyway, our attention is Liouville theorem of \eqref{LEC} on manifolds with  non-negative Ricci curvature, hence Lemma \ref{kl} is enough for it. Interested readers may try to improve it and following Theorem \ref{4main2}.
	\end{rmk}

	By nearly same argument as in Section \ref{S2} and Lemma \ref{kl}, we have the following theorem. 
	\begin{thm}\label{4main}
		Let $(\M,g)$ be an n-dimensional complete Riemannian manifold with $Ric\ge-Kg$, where $K\ge 0$. If $u$ is a positive solution of \eqref{LEC} on $B(x_0,2R)$, for the following cases:\\
		{\rm(I)} $s\in(-\infty,1]$ and $t>1$,\\
		{\rm(II)} $s\in(1,p(n))$ and $t>s$,\\
		we have
		\begin{equation}\label{strong}
			\sup_{B(x_0,R)}\frac{|\nabla u|^2}{u^2}\le C\left(K+\frac{1}{R^2}\right),
		\end{equation}	
		where $C=C(n,s,t)$.
		
	\end{thm}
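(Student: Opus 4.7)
The plan is to run the Bernstein argument of Theorem \ref{main1} verbatim, now using Lemma \ref{kl} in place of Proposition \ref{k1}. The key point is that in both cases (I) and (II), Lemma \ref{kl} provides a choice $\b=\b(n,s,t)>0$ together with a positive constant $L=L(n,s,t)>0$ such that the auxiliary function $F=|\nabla w|^2/w^2$ (with $w=u^{-\b}$) satisfies
\begin{equation*}
\Delta F\ge 2w^{-2}\mathrm{Ric}(\nabla w,\nabla w)+2\left(\tfrac{1}{\b}-1\right)\langle\nabla F,\nabla\ln w\rangle+LF^2.
\end{equation*}
Combined with the hypothesis $\mathrm{Ric}\ge -Kg$, this yields
\begin{equation*}
\Delta F\ge -2KF+2\left(\tfrac{1}{\b}-1\right)\langle\nabla F,\nabla\ln w\rangle+LF^2.
\end{equation*}

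I would then fix the same cut-off $\Phi$ as in the proof of Theorem \ref{main1}: a radial function equal to $1$ on $B(x_0,R)$, supported in $B(x_0,2R)$, with $|\nabla\Phi|^2/\Phi\le C/R^2$ and $\Delta\Phi\ge -C(n)(\sqrt{K}/R+1/R^2)$, the latter coming from the Laplacian comparison theorem. At an interior maximum point $\bx$ of $\Phi F$ (without loss of generality lying off the cut locus, by Calabi's trick), one has $\nabla(\Phi F)(\bx)=0$ and $\Delta(\Phi F)(\bx)\le 0$. Expanding $\Delta(\Phi F)=\Delta\Phi\cdot F+\Phi\Delta F-2(|\nabla\Phi|^2/\Phi)F$ and substituting the differential inequality above produces
\begin{equation*}
0\ge \Delta\Phi\cdot(\Phi F)-2\tfrac{|\nabla\Phi|^2}{\Phi}(\Phi F)-2K\Phi^2 F+L(\Phi F)^2-2\left(\tfrac{1}{\b}-1\right)\langle\nabla\Phi,\nabla\ln w\rangle(\Phi F),
\end{equation*}
where the cross term has been rewritten via the critical-point identity $\Phi\nabla F=-F\nabla\Phi$.

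The one estimate that requires care is the gradient cross term. Since $|\nabla\ln w|=F^{1/2}$, a Cauchy--Schwarz plus Young splitting with exponents $(2,2)$ gives
\begin{equation*}
\left|2\left(\tfrac{1}{\b}-1\right)\langle\nabla\Phi,\nabla\ln w\rangle(\Phi F)\right|\le \e(\Phi F)^2+\tfrac{C(n,s,t,\e)|\nabla\Phi|^2}{\Phi}(\Phi F),
\end{equation*}
for any $\e>0$. Choosing $\e=L/2$ and collecting all terms then yields a quadratic inequality of the form
\begin{equation*}
\tfrac{L}{2}(\Phi F)^2\le C(n,s,t)\left(K+\tfrac{1}{R^2}+\tfrac{\sqrt{K}}{R}\right)(\Phi F),
\end{equation*}
which after division by $\Phi F$ and the elementary bound $\sqrt{K}/R\le K+1/R^2$ produces $\sup_{B(x_0,R)}F\le C(n,s,t)(K+1/R^2)$. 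Recalling $F=\b^2|\nabla u|^2/u^2$ finishes the proof.

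The only genuinely new step compared with the Allen--Cahn argument is the selection of $\b$ and $L$ in Lemma \ref{kl}; the rest is mechanical. The one mild subtlety is that in case (II) the admissible range of $s$ requires $s<p(n)=(n+3)/(n-1)$, which is precisely what forces the coefficient of $F^2$ in Lemma \ref{kl} to be strictly positive, and hence the constant $L>0$ on which the entire Bernstein argument rests; losing this positivity is the only place the method could fail.
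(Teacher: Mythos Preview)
Your proposal is correct and is precisely the argument the paper intends: the paper itself does not write out a separate proof of Theorem \ref{4main} but simply says it follows ``by nearly same argument as in Section \ref{S2} and Lemma \ref{kl}'', and you have carried out exactly that argument---invoking Lemma \ref{kl} in place of Proposition \ref{k1}, applying the same cut-off and the same maximum-point computation from the proof of Theorem \ref{main1}, and handling the cross term $-2(\tfrac{1}{\b}-1)\langle\nabla\Phi,\nabla\ln w\rangle(\Phi F)$ with Cauchy--Schwarz and Young just as in (2.16)--(2.17). Your remark that the positivity of $L$ (and hence the restriction $s<p(n)$ in case (II)) is the crux is accurate.
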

	As its natural corollary, we can see 
	
	\begin{cor}\label{4mainc}
		Let $(\M,g)$ be an n-dimensional complete Riemannian manifold with $Ric\ge-Kg$, where $K\ge 0$. If $u$ is a positive solution of \eqref{LEC} on $B(x_0,2R)$, for the following cases:\\
		{\rm(I)} $s\in(-\infty,1]$ and $t>1$,\\
		{\rm(II)} $s\in(1,p(n))$ and $t>s$,\\
		we have
		\begin{equation}\label{strong}
			\sup_{B(x_0,R)}u\le e^{C(\sqrt{K}R+1)}\inf_{B(x_0,R)}u,
		\end{equation}	
		where $C=C(n,s,t)$.
		
	\end{cor}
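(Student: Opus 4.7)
The plan is to derive the Harnack inequality from the pointwise logarithmic gradient estimate of Theorem \ref{4main} by integrating $\nabla \ln u$ along a minimizing geodesic, which is the standard route from Yau-type gradient estimates. After an innocent rescaling of the radius (applying Theorem \ref{4main} at each center $z \in B(x_0, R)$ to the sub-ball $B(z, R) \subset B(x_0, 2R)$, or equivalently proving the estimate first on $B(x_0, R/2)$ and relabeling), one may assume the pointwise bound
\[
|\nabla \ln u|(z) \le \sqrt{C(n,s,t)}\Bigl(\sqrt{K} + \tfrac{1}{R}\Bigr)
\]
on a region containing $B(x_0, R)$ together with every minimizing geodesic between two points of $B(x_0, R)$.

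For arbitrary $x, y \in B(x_0, R)$, I would then take a minimizing geodesic $\gamma : [0, L] \to \M$ from $x$ to $y$ parametrized by arclength, with $L = d(x, y) \le 2R$. The geometric observation that makes everything go is that such a $\gamma$ lies in $B(x_0, 2R)$: for every $s \in [0, L]$,
\[
d(x_0, \gamma(s)) \le \min\bigl(d(x_0, x) + s,\ d(x_0, y) + (L - s)\bigr) \le R + \tfrac{L}{2} \le 2R,
\]
so the pointwise gradient bound can be applied all along $\gamma$.

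Integrating $\nabla \ln u$ along $\gamma$ then yields
\[
|\ln u(x) - \ln u(y)| \le \int_0^L |\nabla \ln u|(\gamma(s))\, ds \le 2R\sqrt{C}\Bigl(\sqrt{K} + \tfrac{1}{R}\Bigr) = 2\sqrt{C}\bigl(\sqrt{K}R + 1\bigr).
\]
Exponentiating and taking $\sup_x$ and $\inf_y$ over $B(x_0, R)$ gives the claimed estimate with constant $C' = 2\sqrt{C}$. There is no substantive obstacle in this corollary: the only mildly technical point is the radius bookkeeping ensuring the pointwise bound is valid along the entire geodesic, and this is dispatched by the triangle-inequality computation above combined with the harmless rescaling in the first step.
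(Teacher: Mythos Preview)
Your approach is exactly the standard one and is what the paper has in mind: the corollary is stated with the preface ``As its natural corollary, we can see'' and no separate proof is given. Integrating the bound $|\nabla\ln u|\le \sqrt{C}\bigl(\sqrt{K}+\tfrac1R\bigr)$ from Theorem~\ref{4main} along a path of length $\le 2R$ and exponentiating is precisely the intended derivation.

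One small point of bookkeeping deserves tightening. Theorem~\ref{4main} gives the pointwise bound only on $B(x_0,R)$, whereas the minimizing geodesic between two points of $B(x_0,R)$ need only lie in $B(x_0,2R)$, as your triangle-inequality computation shows. Your two proposed remedies do not quite close this: recentering at $z\in B(x_0,R)$ still only yields the bound on $B(x_0,R)$, and ``proving it on $B(x_0,R/2)$ and relabeling'' changes the hypothesis from $B(x_0,2R)$ to $B(x_0,4R)$. The clean fix is simply to route through the center: for any $x\in B(x_0,R)$ the \emph{radial} minimizing geodesic from $x$ to $x_0$ has length $<R$ and stays entirely in $B(x_0,R)$, so
\[
|\ln u(x)-\ln u(x_0)|\le d(x,x_0)\cdot\sup_{B(x_0,R)}|\nabla\ln u|\le \sqrt{C}\bigl(\sqrt{K}R+1\bigr),
\]
and the triangle inequality for $|\ln u(x)-\ln u(y)|$ gives the result with constant $2\sqrt{C}$. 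With this adjustment your argument is complete and matches the paper's (implicit) reasoning.
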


	\begin{thm}\label{4main2}
		Let $(\M,g)$ be an n-dimensional complete Riemannian manifold with non-negative Ricci curvature. If $u$ is a positive solution of \eqref{LEC} on $\M$ with $a>0,b>0$, for the following two cases:\\
		{\rm(I)} $s\in(-\infty,1]$ and $t>1$,\\
		{\rm(II)} $s\in(1,p(n))$ and $t>s$,\\
		then $u\equiv\left(\frac{a}{b}\right)^{\frac{1}{t-s}}$.
		
	\end{thm}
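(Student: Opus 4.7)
The plan is to deduce Theorem \ref{4main2} as a direct consequence of the local logarithmic gradient estimate in Theorem \ref{4main}. Since the hypotheses on $s,t$ in Theorem \ref{4main2} match precisely cases (I) and (II) of Theorem \ref{4main}, and the ambient manifold is assumed to have non-negative Ricci curvature (so one may take $K=0$ in that theorem), I would apply Theorem \ref{4main} with $K=0$ on an arbitrary geodesic ball $B(x_0,2R)\subset \M$ to obtain
\begin{equation}
\sup_{B(x_0,R)}\frac{|\nabla u|^2}{u^2}\le \frac{C(n,s,t)}{R^2}.
\end{equation}

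Next, since $u$ is defined on all of $\M$, for any fixed $x\in\M$ I would let $R\to\infty$ in the inequality above (the right-hand side is independent of $x_0$ and the constant does not depend on $R$). This forces $|\nabla u|(x)/u(x)=0$ at every $x\in\M$, so that $\nabla u\equiv 0$ on $\M$. Because $\M$ is connected, $u$ must be a positive constant.

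Finally, substituting the constant $u\equiv c>0$ into equation \eqref{LEC} gives $ac^s-bc^t=0$, i.e.\ $c^{t-s}=a/b$, where the division is legitimate thanks to the assumptions $a>0$ and $b>0$ (and $t\neq s$, which is part of the standing hypothesis on \eqref{LEC}). Solving yields $c=(a/b)^{1/(t-s)}$, as required.

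I do not expect any genuine obstacle here: Theorem \ref{4main} already encapsulates the analytic work (Bernstein argument with the exponent $\beta=\beta(n,s,t)$ chosen as in Lemma \ref{kl} and the Laplacian-comparison cut-off from Section \ref{S2}), and the passage from a local gradient estimate with $K=0$ to a Liouville statement via $R\to\infty$ is routine. The only point to double-check is that the constant $C(n,s,t)$ in Theorem \ref{4main} genuinely does not depend on $R$, which is clear from the proof scheme described in Section \ref{S2}.
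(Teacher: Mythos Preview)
Your proposal is correct and is exactly the argument the paper has in mind: Theorem \ref{4main2} is stated as an immediate consequence of Theorem \ref{4main} (just as Theorem \ref{main2} is a corollary of Theorem \ref{main1}), obtained by taking $K=0$, letting $R\to\infty$ to force $\nabla u\equiv 0$, and then reading off the constant from the equation.
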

	\begin{rmk}
		\rm(A) Except Allen-Cahn equation, Theorem \ref{4main2} also can be applied for static Fisher-KPP equation (with $a=b>0, s=1, t=2$ in equation \eqref{LEC}) and static Newell-Whitehead equation (with $a>0,b>0, s=1, t=3$ in equation \eqref{LEC}). Therefore, \emph{the static positive solution of Fisher-KPP equation (or Newell-Whitehead equation) on the complete Riemannian manifold with nonnegative Ricci curvature must be constant $1$}.\\	(B) As corollary of Theorem \ref{4main} and strong maximal principle, we can obtain the Liouville theorem of equation \eqref{LEC} with $a>0$ and $b=0$, that is, \emph{for $s\in(-\infty,p(n))$, the nonnegative solution of \eqref{LEC} on the complete Riemannain manifold with nonnegative Ricci curvature must be trivial.} However, this result is not satisfied because the critical number $s$ of existence and non-existence of positive solutions of  equation \eqref{LEC} ($a>0$ and $b=0$) should be $\frac{n+2}{n-2}$ for $n>2$ and $\infty$ for $n\in[1,2]$ by Gidas-Spruck \cite{GS}. The present method and tricks in the paper can not achieve this result and we have verified it (and more general case) in the paper \cite{LU2}. \\
		(C) At $a>0$ case (in equation \eqref{LEC}), We mention that the cases $b>0$ and $b=0$ are very different about the behavior of positive  solutions of \eqref{LEC}. If $b=0$, one can get sharp pointwise uniform estimate about solutions on any domains on manifolds (even metric spaces), however, for $b>0$ case, it seems difficult to derive such estimate without  prior upper bound condition.\\
		(D) As same as Allen-Cahn equation, one also can get Liouville theorem of \eqref{LEC} on closed manifolds under a loose Ricci curvature condition. On negative curved complete manifolds case, one also can obtain Liouville theorem of \eqref{LEC} as Theorem \ref{main2}. For example, by imitating proof of Theorem \ref{main2}, one can directly show that \emph{the positive classical solutions of static Whitehead-Newell equation on complete Riemannian manifolds with $Ric\ge-Kg$ and $K\le aN(n)$ must be constant, where $N(n)$ is given by \eqref{N}.} One can try to give similar result for Fisher-KPP equation by Proposition \ref{k} and argument in the proof of Theorem \ref{main2}. 
	\end{rmk}

	Now, combining same argument as Section 3 and Lemma \ref{kl}, we have the following theorem.
	
	\begin{thm}\label{4001}
		Let $(\M,g)$ be an n-dimensional complete Riemannian manifold. If $u$ is a positive solution of \eqref{LEC} on $B(x_0,2R)$, then for the following two cases:\\
		{\rm(I)} $s\in(-\infty,1]$ and $t>1$,\\
		{\rm(II)} $s\in(1,p(n))$ and $t>s$,\\
		and for 
		any $p>\frac{n}{2}$, there exists constants $k=k(n,p)>0$ and $C=C(n,p,s,t)>0$ such that if $k(p,R)\le k$, we have
		\begin{equation}
			\sup_{B(x_0,R)}\frac{|\nabla u|^2}{u^2}\le \frac{C}{R^2}.
		\end{equation}	
		
	\end{thm}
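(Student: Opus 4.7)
The plan is to run the parabolic Bernstein argument of Section \ref{S3} verbatim, but with Lemma \ref{kl} playing the role that Proposition \ref{k1} played there. Concretely, I would first prove an analog of Lemma \ref{301} for the general equation \eqref{LEC}: for some $\b=\b(n,s,t)>0$ and some $\delta=\delta(n,s,t)\in(0,\tfrac15)$, and for any positive $J\in C^{2,1}(\Omega\times[0,\infty))$,
\begin{equation*}
\mathscr{L}(JF)\;\ge\;\Bigl(\mathscr{L}J-5\tfrac{|\nabla J|^2}{J\delta}-2Ric^{-}\Bigr)F+2\Bigl(\tfrac{1}{\b}-1\Bigr)\bigl\langle \nabla(JF),\nabla\ln w\bigr\rangle+L'\,JF^2,
\end{equation*}
for some $L'=L'(n,s,t)>0$. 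The derivation mirrors Lemma \ref{301}: one expands $\mathscr{L}(JF)$ via Proposition \ref{k}, then uses Cauchy--Schwarz with parameter $\delta$ to absorb the cross terms $2\langle\nabla J,\nabla F\rangle$ and $-2F(\tfrac{1}{\b}-1)\langle\nabla J,\nabla\ln w\rangle$ into $|\nabla^2 w|^2/w^2$ and $|\nabla w|^4/w^4$ at the expense of a small loss in the $F^2$ coefficient. The key point is that Lemma \ref{kl} already guarantees a strictly positive coefficient $L(n,s,t)>0$ in front of $F^2$ after choosing $\b=\b(n,s,t)$; so, by picking $\delta$ small depending on $(n,s,t)$, a definite fraction $L'>0$ survives the absorption.

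Next, for $p>n/2$ I would invoke Lemma \ref{C3} with this $\delta=\delta(n,s,t)$ to obtain a positive function $J$ on $B(x_0,2R)\times[0,\infty)$ solving $\mathscr{F}(\delta,J)=0$, so that the $\mathscr{L}J-5|\nabla J|^2/(J\delta)-2Ric^-$ term drops out, while still enjoying the pointwise bound $J_{2R}(t)\le J(x,t)\le 1$ with $J_{2R}$ as in \eqref{jr}. Then take a cutoff $\phi$ from Lemma \ref{ecf} with $|\nabla\phi|^2+|\Delta\phi|\le C/R^2$, and set
\begin{equation*}
G(x,t)=t\,\phi(x)^2\,J(x,t)\,F(x)\quad\text{on }B(x_0,2R)\times[0,T].
\end{equation*}

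At an interior maximum $(\bar x,\bar t)$ of $G$ with $G(\bar x,\bar t)>0$, a straightforward computation (as between \eqref{bb} and \eqref{306}) gives
\begin{equation*}
G(\bar x,\bar t)\le C(n,p,s,t)\Bigl(\bar t\,|\Delta(\phi^2)|+\phi^2+\bar t\,\tfrac{|\nabla(\phi^2)|^2}{\phi^2}\Bigr)\cdot J(\bar x,\bar t),
\end{equation*}
where the constant incorporates $L'$ and $\b$, and one Cauchy--Schwarz step (as in \eqref{cs}) is used to kill the $\langle\nabla\phi^2,\nabla\ln w\rangle$ term. Using $|\nabla\phi|^2+|\Delta\phi|\le C/R^2$ and $J\le 1$, this yields $G(\bar x,\bar t)\le C(1+T/R^2)$. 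Finally, taking $T=R^2$ and dividing by $TJ_{2R}(T)$, the explicit form of $J_{2R}(R^2)$ in \eqref{jr} is bounded below by a constant depending only on $(n,p,s,t)$ once $k(p,R)\le k(n,p)$. This gives $\sup_{B(x_0,R)}F\le C(n,p,s,t)/R^2$, and the conclusion follows since $|\nabla u|^2/u^2=\b^{-2}F$.

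The main obstacle is the first step: ensuring that after one simultaneously (i) handles the nonlinear coefficients from Proposition \ref{k} using the case-by-case choice of $\b$ in Lemma \ref{kl} and (ii) absorbs the $J$-cross terms using $\delta$, there is still a strictly positive $F^2$ term left. Because Lemma \ref{kl} is proved with strict inequalities and leaves room (in each of Cases 1--4 of its proof), shrinking $\delta$ suffices; no new algebraic miracle is required. Once that parabolic differential inequality is in hand, the remainder is a routine transcription of the argument in Section \ref{S3}.
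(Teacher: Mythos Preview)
Your proposal is correct and is exactly the approach the paper intends: the paper gives no explicit proof of Theorem~\ref{4001} beyond the sentence ``combining same argument as Section~\ref{S3} and Lemma~\ref{kl},'' and you have accurately spelled out what that combination entails. The only minor imprecision is that the expansion of $\mathscr{L}(JF)$ should start from the analogue of \eqref{p5}--\eqref{p6} for equation~\eqref{LEC} (i.e., before the trace identity is applied), rather than from Proposition~\ref{k} itself, so that the full $|\nabla^2 w|^2$ term is available to absorb the $\delta$-losses; but your description of the mechanism and the role of $\delta=\delta(n,s,t)$ makes clear you understand this.
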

	
	\begin{cor}\label{4main1cc}
		Let $(\M,g)$ be an n-dimensional complete Riemannian manifold. If $u$ is a positive solution of \eqref{LEC} on $B(x_0,2R)$, then for the following two cases:\\
		{\rm(I)} $s\in(-\infty,1]$ and $t>1$,\\
		{\rm(II)} $s\in(1,p(n))$ and $t>s$,\\
		and for 
		any $p>\frac{n}{2}$, there exist constants $k=k(n,p)>0$ and $C=C(n,p,s,t)>0$ such that if $k(p,R)\le k$, 	we have
		\begin{equation}\label{strong}
			\sup_{B(x_0,R)}u\le C\inf_{B(x_0,R)}u.
		\end{equation}	
		
	\end{cor}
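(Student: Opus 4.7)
My plan is to derive this Harnack inequality directly from the logarithmic gradient bound already furnished by Theorem \ref{4001}, via the standard device of integrating $|\nabla\ln u|$ along a broken geodesic that passes through the center $x_0$.

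First I would invoke Theorem \ref{4001}: under the stated hypothesis on $(s,t)$ and the smallness assumption $k(p,R)\le k$, it supplies a constant $C_0=C_0(n,p,s,t)>0$ with
\begin{equation*}
|\nabla\ln u|(z)\le \frac{\sqrt{C_0}}{R}\qquad\text{for all }z\in B(x_0,R).
\end{equation*}
Next, for arbitrary $x,y\in B(x_0,R)$, I would join them by a broken path $\gamma=\gamma_1\ast\gamma_2$, where $\gamma_1$ is a minimizing geodesic from $x_0$ to $x$ and $\gamma_2$ a minimizing geodesic from $x_0$ to $y$. Each piece has length strictly less than $R$, and because every initial subsegment of a minimizing geodesic emanating from $x_0$ is itself minimizing, each $\gamma_i$ stays entirely inside $B(x_0,R)$, so that the pointwise bound on $|\nabla\ln u|$ applies at every point of $\gamma$. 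Parametrizing by arclength (total length less than $2R$),
\begin{equation*}
|\ln u(x)-\ln u(y)|\;\le\;\int_\gamma |\nabla\ln u|\,ds\;\le\;\frac{\sqrt{C_0}}{R}\cdot 2R\;=\;2\sqrt{C_0}.
\end{equation*}
Exponentiating and then taking the supremum over $x$ and infimum over $y$ in $B(x_0,R)$ produces $\sup_{B(x_0,R)}u\le e^{2\sqrt{C_0}}\inf_{B(x_0,R)}u$, which is the stated inequality with $C=e^{2\sqrt{C_0}}$.

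The only conceptual point requiring attention is ensuring that the path of integration remains inside the region on which the gradient estimate is valid; routing through $x_0$ handles this automatically since subsegments of minimizing geodesics based at $x_0$ never leave $B(x_0,R)$. Beyond this, the argument is mechanical: the integral Ricci perturbation hypothesis enters only through the constant $C_0$ inherited from Theorem \ref{4001}, and no additional use of $k(p,R)$ is required in the passage from the gradient bound to the Harnack inequality.
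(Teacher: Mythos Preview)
Your argument is correct and is precisely the standard passage from a logarithmic gradient bound to a Harnack inequality; the paper states Corollary~\ref{4main1cc} without proof, treating it as the immediate consequence of Theorem~\ref{4001} that you have written out. Your care in routing the path through $x_0$ so that it remains inside $B(x_0,R)$ is the only point that needs mention, and you have handled it correctly.
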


	Now, we provide a question that is inspired by Theorem \ref{4main2}.
	Define

	\begin{equation}
		E=\left\{\begin{array}{l|l}
			(s,t) \in \mathbb{R}^2 & \begin{array}{l}
				\text{The postive classical solutions of \eqref{LEC} on complete } \\
				\text{Riemannian manifolds with non-negative Ricci }\\
				\text{curvature must be constant	for any $a>0$ and $b>0$.}
			\end{array}
		\end{array}\right\} .
	\end{equation}
	A natural question is how to determine set $E$.

	\section{\textbf{Estimate under Bakry-\'{E}mery curvature}}\label{S5}
	In this section, we consider a slight generalization of equation \eqref{LEC}  as follows.
	\begin{equation}\label{V}
		\d_V u+au^s-bu^t=0,
	\end{equation}
	where $\d_V u=\d u+\left\langle\nabla u,V\right\rangle$, $V$ is a $C^1$ vector field, $a\ge 0,b\ge0$ and $s\neq t$. When $V$ vanishes, equation \eqref{V} becomes \eqref{LEC}.
	
	We also want to get similar estimate as in Section \ref{S4} for equation \eqref{V}. For this, we need to recall previous proof and analyse present suitable condition. As before, we define auxiliary function $F$ as \eqref{tran} and \eqref{F}, by similar computation, then we have
	\begin{eqnarray}\label{5k}
		\Delta_V F&=&2w^{-2}\left|\nabla^2w\right|^2+2w^{-2}Ric_V(\nabla w,\nabla w)+2\left(\frac{1}{\b}-1\right)\left\langle\nabla F,\nabla\ln w\right\rangle\nonumber\\
		&&+2\left(au^{s-1}(1-s)-bu^{t-1}(1-t)\right)\frac{|\nabla w|^2}{w^2}-2\frac{|\nabla w|^4}{w^4},
	\end{eqnarray}	
	where
	\begin{equation}\label{RV}
		Ric_V:=Ric-\frac{1}{2}L_V g,
	\end{equation}
	$ L_V g$ is the Lie derivative of Riemannian metric $g$ on the vector field $V$. Here we use the following Bochner formula to \eqref{5k}\footnote{One can obtain it by using classical Bochner formula and definition \eqref{RV}.}.
	\begin{equation}
		\d_V |\nabla w|^2=2|\nabla^2 w|^2+2Ric_V(\nabla w,\nabla w)+2\left\langle\nabla\d_V w,\nabla w\right\rangle.
	\end{equation}
	
	Notice that for any $m>n$, we have
	\begin{eqnarray}\label{kk}
		|\nabla^2 w|^2&=&\left|\nabla^2 w-\frac{\d w}{n}g\right|^2+\frac{1}{n}\left(\d_V w-\left\langle\nabla w,V\right\rangle\right)^2\nonumber\\
		&\ge&\left|\nabla^2 w-\frac{\d w}{n}g\right|^2+\frac{1}{n}\left(\frac{(\d_V w)^2}{\frac{m}{n}}-\frac{\left\langle\nabla w,V\right\rangle^2}{\frac{m}{n}-1}\right)\nonumber\\
		&\ge&\frac{1}{m}(\d_V w)^2-\frac{1}{m-n}\left\langle\nabla w,V\right\rangle^2,
	\end{eqnarray}
	where we use basic inequality $(a-b)^2\ge\frac{a^2}{t}-\frac{b^2}{t-1}$ for any $t>1$.
	
	Substituting \eqref{kk} into \eqref{5k} and using equation \eqref{V} yield
	
	\begin{eqnarray}\label{5kk}
		\Delta_{V} F&\ge&2w^{-2}Ric_V^m(\nabla w,\nabla w)+2\left(\frac{1}{\b}-1\right)\left\langle\nabla F,\nabla \ln w\right\rangle\nonumber\\
		&&+\left(au^{s-1}\left(\frac{4}{m}(1+\b)+2(1-s)\right)-bu^{t-1}\left(\frac{4}{m}(1+\b)+2(1-t)\right)\right)\frac{|\nabla w|^2}{w^2}\nonumber\\
		&&+\left(\frac{2}{m}\left(1+\frac{1}{\b}\right)^2-2\right)\frac{|\nabla w|^4}{w^4}+\frac{2\b^2}{m}\left(au^{s-1}-bu^{t-1}\right)^2,
	\end{eqnarray}	
	where
	\begin{equation}
		Ric_V^m:=Ric_V-\frac{1}{m-n}V^{\flat}\otimes V^{\flat},
	\end{equation}
	is called $m$-dimensional Bakry-\'{E}mery Ricci curvature, $V^{\flat}$ is the dual $1$-form of vector field $V$ and $m>n$.
	
	By same argument as in the proof of Lemma \ref{kl}, \eqref{5kk} can give
	\begin{lem}\label{5kl}
		Let $u$ be a positive smooth solution of \eqref{V} on a domain $\O\subset\M$, $F$ be defined by \eqref{F} and $m>n$. For the following two cases:\\
		{\rm(I)} $s\in(-\infty,1]$ and $t>1$,\\
		{\rm(II)} $s\in(1,p(m))$ and $t>s$,\\
		there exists $\b=\b(m,s,t)>0$ such that 
		\begin{equation}\label{53}
			\Delta_{V} F\ge 2w^{-2}Ric_V^m(\nabla w,\nabla w)+2\left(\frac{1}{\b}-1\right)\left\langle\nabla F,\nabla \ln w\right\rangle+LF^2\qquad\text{on\quad $\O$},
		\end{equation}
		where $L=L(m,s,t)>0$.
	\end{lem}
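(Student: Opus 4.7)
The plan is to treat \eqref{5kk} as the Bakry-\'{E}mery analogue of the identity from Proposition \ref{k}: after the Bochner step that passed from \eqref{5k} to \eqref{5kk} using the dimensional trick \eqref{kk}, the right-hand side has exactly the same polynomial structure in $F = |\nabla w|^2/w^2$ as in the Riemannian case, but with $n$ replaced by the effective dimension $m$ and $Ric$ replaced by $Ric_V^m$. Specifically, the coefficient of $|\nabla w|^4/w^4$ is $\frac{2}{m}(1+\frac{1}{\beta})^2 - 2$, the $F$-linear coefficient is $au^{s-1}(\frac{4}{m}(1+\beta)+2(1-s)) - bu^{t-1}(\frac{4}{m}(1+\beta)+2(1-t))$, and the residual source is $\frac{2\beta^2}{m}(au^{s-1}-bu^{t-1})^2$. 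Since these three quantities depend on $(s,t,u,\beta,a,b,m)$ in the same algebraic form as the corresponding expressions in Proposition \ref{k} depended on them through $n$, the four-case scheme used to prove Lemma \ref{kl} applies verbatim under the substitution $n \mapsto m$.

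Concretely, in Case (I) I would pick $\beta \in (0, \tfrac{m}{2}(t-1)]$, for which $\tfrac{4}{m}(1+\beta) + 2(1-s) \ge \tfrac{4}{m}$ and $\tfrac{4}{m}(1+\beta) + 2(1-t) \le \tfrac{4}{m}$, so that the $F$-linear coefficient is at least $\tfrac{4}{m}(au^{s-1} - bu^{t-1})$. Then the AM-GM absorption
\[
\Bigl(\tfrac{2}{m}\bigl(1+\tfrac{1}{\beta}\bigr)^2 - 4\Bigr)\frac{|\nabla w|^4}{w^4} + \frac{2\beta^2}{m}(au^{s-1}-bu^{t-1})^2 \;\ge\; \frac{4}{m}\,|au^{s-1}-bu^{t-1}|\cdot \frac{|\nabla w|^2}{w^2},
\]
valid once $\beta$ is small enough that $(1+\beta)^2 \ge 2m\beta^2$, kills any negative sign coming from $au^{s-1} - bu^{t-1}$ and leaves a clean $2F^2$ behind. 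Case (II) subdivides, as in Lemma \ref{kl}, into $s \in (1, 1+\tfrac{2}{m}]$, $s \in (1+\tfrac{2}{m}, 1+\tfrac{4}{m}]$ and $s \in (1+\tfrac{4}{m}, p(m))$: in the first two subranges I would choose $\beta \in (0, \tfrac{m}{2}(s-1)]$ or $\beta \in (0, \tfrac{m}{2}(s-1)-1)$ so that the $F$-linear coefficient is bounded below by $-\tfrac{4}{m}|au^{s-1}-bu^{t-1}|$ and then apply the same AM-GM cancellation; in the last subrange I would introduce an auxiliary $l > 2$, set $\beta = \tfrac{4}{ml-2}$ with $s-1 = \tfrac{4l}{ml-2}$, and split the $|\nabla w|^4/w^4$ coefficient as $(l-2) + \bigl(\tfrac{2}{m}(1+\tfrac{1}{\beta})^2 - l\bigr)$, with the second piece absorbing the $F$-linear term via AM-GM and the first piece yielding $L = l - 2 > 0$.

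The main thing one really needs to check is that the scalar inequalities underlying the AM-GM steps, chiefly $(1+\beta)^2 \ge 2m\beta^2$ and its $nl$-analogue in the last subcase, remain valid when $n$ is replaced by $m$; this is automatic because both hold for all sufficiently small $\beta > 0$ regardless of dimension, and because $p(m) = (m+3)/(m-1)$ is a decreasing function of $m > 1$ so the choice $s - 1 = \tfrac{4l}{ml-2}$ places $\beta$ in the admissible range exactly as in Lemma \ref{kl}. The resulting $\beta$ and $L$ depend only on $(m,s,t)$, giving \eqref{53}. In other words, there is no genuine obstacle beyond careful bookkeeping: all the hard content is packaged into Lemma \ref{kl}, and the present lemma is its $(m,Ric_V^m)$-transcription.
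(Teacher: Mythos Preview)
Your proposal is correct and matches the paper's approach exactly: the paper itself proves Lemma~\ref{5kl} by simply pointing to \eqref{5kk} and stating that the same argument as in Lemma~\ref{kl} applies, which is precisely the $n\mapsto m$, $Ric\mapsto Ric_V^m$ transcription you carry out case by case. One small imprecision: the AM--GM absorption in Cases~1--3 requires the slightly stronger condition $(1+\beta)^2-2m\beta^2\ge 1$ (so that $\sqrt{(1+\beta)^2-2m\beta^2}\ge 1$), not merely $(1+\beta)^2\ge 2m\beta^2$; but since both hold for all sufficiently small $\beta>0$, your ``$\beta$ small enough'' qualifier already covers this.
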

	
	Via Bakry-Qian's Laplacian comparison theorem \cite[Theorem 4.2]{BQ}, we also have the following  cut-off function 
	as in the proof of Theorem \ref{main1}.  
	
	\begin{lem}\label{VCF}
		Let $(\M,g)$ be an n-dimensional complete Riemannian manifold with $Ric_V^m\ge -Kg$, where $V$ is a smooth vector field, $m>n$ and $K\ge 0$. Then we have cut-off function $\Phi\in C^{\infty}_{0}(B(x_0,2R))$ such that\\
		{\rm (i)} $\Phi(x)=\phi(d(x_0,x))$, where $\phi$ is a non-increasing function on $[0,\infty)$ and
		\begin{eqnarray}
			\Phi(x)=
			\begin{cases}
				1\qquad\qquad\qquad\,\text{if}\qquad x\in B(x_0,R)\nonumber\\
				0\qquad\qquad\qquad\,\text{if}\qquad x\in B(x_0,2R)\setminus B(x_0,\frac{3}{2}R).
			\end{cases}
		\end{eqnarray}
		{\rm (ii)} For any fixed $l\in(0,1)$,
		\begin{equation}
			\frac{|\nabla \Phi|}{\Phi^{l}}\le\frac{C(l)}{R}.\nonumber
		\end{equation}
		{\rm (iii)}
		\begin{equation}
			\Delta_V \Phi\ge-\frac{C\sqrt{K}}{R}-\frac{C}{R^2}\nonumber
		\end{equation}
		on $B(x_0,2R)$ and outside of cut locus of $x_0$, where $C=C(m)$.	
	\end{lem}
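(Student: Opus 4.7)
The plan is to construct $\Phi$ as a composition of a one-dimensional cut-off with the distance function, reducing the problem to Bakry-Qian's comparison theorem. Let $r(x)=d(x_0,x)$ and choose a smooth non-increasing function $\phi:[0,\infty)\to[0,1]$ with $\phi\equiv 1$ on $[0,R]$, $\phi\equiv 0$ on $[\frac{3}{2}R,\infty)$, and satisfying the standard Li-Yau type estimates $|\phi'(s)|^2/\phi^{2l}(s)\le C(l)/R^2$ on the transition interval $[R,\frac{3}{2}R]$, together with $\phi''(s)\ge -C/R^2$. Setting $\Phi(x)=\phi(r(x))$, property (i) is immediate from the definition, and property (ii) follows from $|\nabla \Phi|=|\phi'(r)||\nabla r|=|\phi'(r)|$ almost everywhere.

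For property (iii), note that on $B(x_0,2R)$ minus the cut locus of $x_0$, $r$ is smooth and
\begin{equation*}
\Delta_V \Phi = \phi''(r)|\nabla r|^2+\phi'(r)\Delta_V r = \phi''(r)+\phi'(r)\Delta_V r.
\end{equation*}
Bakry-Qian's Laplacian comparison theorem \cite{BQ}, applied under the hypothesis $Ric_V^m\ge -Kg$ with $m>n$, yields an upper bound of the form $\Delta_V r \le C(m)\bigl(\sqrt{K}+\frac{1}{r}\bigr)$ on the relevant range. Since $\phi'\le 0$, multiplying this upper bound by $\phi'$ reverses the sign, and since the support of $\phi'$ lies in $\{r\ge R\}$, one has $|\phi'(r)|/r\le |\phi'(r)|/R \le C/R^2$. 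Combining this with $\phi''\ge -C/R^2$ delivers
\begin{equation*}
\Delta_V\Phi \ge -\frac{C(m)\sqrt{K}}{R}-\frac{C(m)}{R^2}
\end{equation*}
on $B(x_0,2R)$ away from the cut locus, which is exactly (iii).

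The main obstacle is essentially administrative: matching the precise form of the Bakry-Qian comparison inequality (which often comes with a $\coth$-type expression) to the cleaner bound $C(m)(\sqrt{K}+\tfrac{1}{r})$ used above. Since $\coth(as)\le 1+\frac{1}{as}$ for $s>0$, this reduction is routine, and the annular support of $\phi'$ absorbs all $1/r$ contributions into $1/R^2$ via $r\ge R$. As the statement only requires the inequality outside the cut locus, no distributional or Calabi-type argument is needed here; the construction mirrors the classical Riemannian cut-off, with Bakry-Qian replacing the standard Laplacian comparison theorem.
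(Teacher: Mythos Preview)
Your proof is correct and follows exactly the approach the paper indicates: the paper does not give a detailed proof of this lemma but simply states that it follows ``via Bakry-Qian's Laplacian comparison theorem \cite[Theorem 4.2]{BQ}'' in the same manner as the cut-off used in the proof of Theorem~\ref{main1}. Your argument supplies precisely those details---composing a one-dimensional cut-off with the distance function and invoking Bakry--Qian in place of the classical comparison---so there is nothing to add.
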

	
	Combining Lemma \ref{5kl} and Lemma \ref{VCF}, one can obtain the following result by imitating same process as in the proof of Theorem \ref{main1} (or Theorem \ref{4main}). 
	
	\begin{thm}\label{5main}
		Let $(\M,g)$ be an n-dimensional complete Riemannian manifold with $Ric_V^m\ge-Kg$, where $V$ is a smooth vector field, $K\ge 0$ and $m>n$. If $u$ is a positive solution of \eqref{V} on $B(x_0,2R)$, for the following two cases:\\
		{\rm(I)} $s\in(-\infty,1]$ and $t>1$,\\
		{\rm(II)} $s\in(1,p(m))$ and $t>s$,\\
		we have
		\begin{equation}\label{strong5}
			\sup_{B(x_0,R)}\frac{|\nabla u|^2}{u^2}\le C\left(K+\frac{1}{R^2}\right),
		\end{equation}	
		where $C=C(m,s,t)$.
		
	\end{thm}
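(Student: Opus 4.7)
The plan is to run the Bernstein/Omori–Yau argument used for Theorem \ref{main1} (and Theorem \ref{4main}), but with the classical Laplacian and Bochner formula replaced by their $V$-weighted analogues; all the new ingredients we need have already been prepared in Lemma \ref{5kl} and Lemma \ref{VCF}.

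First, choose $\beta=\beta(m,s,t)>0$ and $L=L(m,s,t)>0$ as provided by Lemma \ref{5kl}, and set $w=u^{-\beta}$, $F=|\nabla w|^2/w^2$. Using the curvature hypothesis $Ric_V^m\ge -Kg$, Lemma \ref{5kl} specialises to
\begin{equation*}
\Delta_V F\;\ge\;-2KF+2\Bigl(\tfrac{1}{\beta}-1\Bigr)\langle\nabla F,\nabla\ln w\rangle+LF^{2}\qquad\text{on }B(x_0,2R).
\end{equation*}
Then let $\Phi$ be the radial cut-off function from Lemma \ref{VCF}, so that $\Phi\equiv 1$ on $B(x_0,R)$, $\Phi\equiv 0$ outside $B(x_0,\tfrac{3}{2}R)$, $|\nabla\Phi|/\Phi^{1/2}\le C/R$ and $\Delta_V\Phi\ge -C\sqrt{K}/R-C/R^{2}$, where $C=C(m)$.

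Next, consider the auxiliary function $\Phi F$ on $\overline{B(x_0,2R)}$. As in the proof of Theorem \ref{main1}, it suffices (by Calabi's trick) to work at an interior maximum point $x^{\ast}\notin\mathrm{cut}(x_0)$ where $\Phi F(x^{\ast})>0$. At such a point $\nabla(\Phi F)=0$, so $\nabla F=-F\,\nabla\Phi/\Phi$, and $\Delta_V(\Phi F)\le 0$. Expanding
\[
\Delta_V(\Phi F)=\Delta_V\Phi\cdot F+\Phi\,\Delta_V F+2\langle\nabla\Phi,\nabla F\rangle,
\]
substituting the lower bound for $\Delta_V F$, multiplying through by $\Phi$, and using $\Phi\le 1$, I arrive at
\begin{equation*}
L(\Phi F)^{2}\;\le\;\Bigl(-\Phi\,\Delta_V\Phi+2|\nabla\Phi|^{2}+2K\Phi\Bigr)(\Phi F)+2\Bigl|\tfrac{1}{\beta}-1\Bigr|\,\Phi F\,|\nabla\Phi|\,F^{1/2},
\end{equation*}
where the last term arises from the Cauchy–Schwarz bound $|\langle\nabla\Phi,\nabla\ln w\rangle|\le |\nabla\Phi|F^{1/2}$.

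The main technical point is to dispose of the cross term $|\nabla\Phi|\,\Phi F\cdot F^{1/2}=(|\nabla\Phi|/\Phi^{1/2})(\Phi F)^{3/2}$; I rewrite it this way precisely so that the cut-off property (ii) from Lemma \ref{VCF} can be applied, and then use Young's inequality with exponents $(4/3,4)$ to get, for any $\varepsilon>0$,
\[
2\Bigl|\tfrac{1}{\beta}-1\Bigr|\,(\Phi F)^{3/2}\cdot\frac{|\nabla\Phi|}{\Phi^{1/2}}\le \varepsilon(\Phi F)^{2}+\frac{C(\beta,\varepsilon)}{R^{4}}.
\]
Fixing $\varepsilon=L/2$ and absorbing into the left-hand side gives the scalar quadratic
\[
\tfrac{L}{2}(\Phi F)^{2}\;\le\;C\Bigl(K+\tfrac{\sqrt{K}}{R}+\tfrac{1}{R^{2}}\Bigr)(\Phi F)+\tfrac{C}{R^{4}},
\]
with $C=C(m,s,t)$. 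Solving this inequality (and using $\sqrt{K}/R\le K+1/R^{2}$) yields $\Phi F(x^{\ast})\le C(m,s,t)(K+1/R^{2})$. Since $\Phi\equiv 1$ on $B(x_0,R)$ and $F=\beta^{2}|\nabla u|^{2}/u^{2}$, the desired bound \eqref{strong5} follows.

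The only step that is not a routine transcription of the proofs of Theorems \ref{main1} and \ref{4main} is justifying that those proofs actually carry through once Ricci is replaced by $Ric_V^m$. This is exactly where the two supplied ingredients become essential: Lemma \ref{5kl} absorbs the extra $\langle\nabla u,V\rangle$ contributions inside the good term $LF^{2}$ at the cost of using the effective dimension $m>n$, and Lemma \ref{VCF} (based on Bakry–Qian's Laplacian comparison for $\Delta_V$) guarantees that the standard Li–Yau cut-off still satisfies both (ii) and the lower bound on $\Delta_V\Phi$. Once these are in hand, the Bernstein computation above is identical in form to the Riemannian case, so I expect no further obstacle.
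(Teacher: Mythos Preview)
Your proposal is correct and follows exactly the route the paper itself indicates: combine Lemma \ref{5kl} with the cut-off of Lemma \ref{VCF} and rerun the Bernstein/maximum-point argument of Theorem \ref{main1} (equivalently Theorem \ref{4main}) with $\Delta$ replaced by $\Delta_V$. Your treatment of the cross term via $(\Phi F)^{3/2}\cdot|\nabla\Phi|/\Phi^{1/2}$ and Young's inequality is a harmless variant of the $\varepsilon(\Phi F)^2+C(n,\varepsilon)\frac{|\nabla\Phi|^2}{\Phi}(\Phi F)$ splitting used in the proof of Theorem \ref{main1}, and leads to the same conclusion.
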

	
	We omit Harnack inequality of corollary of Theorem \ref{5main} and give  Liouville theorem as follows.
	
	\begin{thm}\label{5main2}
		Let $(\M,g)$ be an n-dimensional complete Riemannian manifold with $Ric_V^m\ge 0$, where $V$ is a smooth vector field and $m>n$. If $u$ is a positive solution of \eqref{V} on $\M$ with $a>0,b>0$, for the following two cases:\\
		{\rm(I)} $s\in(-\infty,1]$ and $t>1$,\\
		{\rm(II)} $s\in(1,p(m))$ and $t>s$,\\then $u\equiv\left(\frac{a}{b}\right)^{\frac{1}{t-s}}$.
		
	\end{thm}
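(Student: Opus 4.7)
The plan is to deduce Theorem \ref{5main2} as a direct corollary of the local gradient estimate in Theorem \ref{5main}, exactly in the spirit of how Theorem \ref{4main2} follows from Theorem \ref{4main}. Since $Ric_V^m \ge 0$ we are in the $K = 0$ case, so for every $x_0 \in \M$ and every $R > 0$, Theorem \ref{5main} applied on $B(x_0, 2R)$ yields
\begin{equation*}
\frac{|\nabla u|^2(x_0)}{u^2(x_0)} \le \sup_{B(x_0,R)} \frac{|\nabla u|^2}{u^2} \le \frac{C(m,s,t)}{R^2}.
\end{equation*}
Letting $R \to \infty$ with $x_0$ fixed, the right-hand side tends to $0$, so $|\nabla u|(x_0) = 0$. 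Since $x_0$ was arbitrary, $u$ is constant on $\M$.

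Substituting a positive constant $u \equiv c$ into equation \eqref{V} kills the $\Delta_V$ term and leaves $ac^s - bc^t = 0$, i.e. $c^s(a - bc^{t-s}) = 0$. Because $c > 0$ and $a, b > 0$ with $s \ne t$, this forces $c = (a/b)^{1/(t-s)}$, which is the claimed identity.

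The only real content is therefore Theorem \ref{5main}, which has already been obtained by combining Lemma \ref{5kl} with the Bakry--Qian cut-off function Lemma \ref{VCF} via the same Bernstein maximum-principle argument used in the proofs of Theorem \ref{main1} and Theorem \ref{4main}. There is no genuine obstacle at this stage: the only thing to verify is that the hypothesis on $(s,t)$ in Theorem \ref{5main2} matches the hypothesis in Theorem \ref{5main} (both list the same two cases with $p(m)$), and that positivity of $u$ is preserved under the transformation $w = u^{-\b}$ used to define $F$, which is immediate since $u > 0$ on all of $\M$.
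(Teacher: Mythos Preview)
Your proposal is correct and follows exactly the approach the paper intends: Theorem \ref{5main2} is stated as an immediate Liouville-type corollary of the local gradient estimate Theorem \ref{5main}, in the same way Theorem \ref{4main2} follows from Theorem \ref{4main}, and the paper gives no separate proof. Your argument---let $R\to\infty$ in \eqref{strong5} with $K=0$ to force $|\nabla u|\equiv 0$, then solve $ac^s-bc^t=0$ for the constant---is precisely the standard route and nothing more is needed.
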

	
	For Allen-Cahn equation or static Fisher-KPP equation with gradient item $\left\langle\nabla u,V\right\rangle$ for some smooth vector field $V$,  if we find $m>n$ and $Ric_V^m\ge 0$, then we deduce that the positive solution $u\equiv 1$ by Theorem \ref{5main2}. Actually, as Theorem \ref{main2}, one also can loose Bakry-\'{E}mery curvature to a little negative case.

	
	

	\section*{\textbf{Acknowledgements}}The  author would like to thank Professor Jiayu Li for his support and  encouragement. After the completion of this work (January 2023), we find that Proposition \ref{k1} can be generalized for more nonlinearities and the logarithmic gradient estimate also can be generalized, see \cite{LU2}. Recently, a parabolic generalization of present work was collected in \cite{LU}.

\end{document}